\def\rset{\mathbb R}
\newcommand{\pscal}[2]{\left\langle#1,#2\right\rangle}
\newcommand{\eqdef}{\ensuremath{\stackrel{\mathrm{def}}{=}}}
\def\Xset{\mathcal{Z}} 
\def\e{\mathcal{E}}
\def\N{\mathcal{N}}
\def\M{\mathcal{M}}
\def\G{\mathcal{G}}
\def\A{\mathcal{A}}
\def\H{\mathcal{H}}
\newcommand{\Sa}{\mathcal{S}}
\def\PP{\mathbb{P}} 
\newcommand{\CPP}[3][]
{\ifthenelse{\equal{#1}{}}{{\mathbb P}\left(\left. #2 \, \right| #3 \right)}{{\mathbb P}_{#1}\left(\left. #2 \, \right | #3 \right)}}
\def\PE{\mathbb{E}} 
\newcommand{\CPE}[3][]
{\ifthenelse{\equal{#1}{}}{{\mathbb E}\left[\left. #2 \, \right| #3 \right]}{{\mathbb E}_{#1}\left[\left. #2 \, \right | #3 \right]}}
\def\L{\mathcal{L}} 
\newcommand{\normfro}[1]{\left\Vert#1\right\Vert_{\textsf{F}}}
\newcommand{\tnorm}[1]{\left\vert\!\left\vert\!\left\vert#1\right\vert\!\right\vert\!\right\vert}
\newcommand{\nnorm}[1]{\left\Vert#1\right\Vert_2}
\def\r{\textsf{r}}
\theoremstyle{plain}
\newtheorem{theorem}{Theorem}
\newtheorem{assumption}{H\hspace{-3pt}}
\newtheorem{assumptionB}{B\hspace{-3pt}}
\newtheorem{assumptionC}{C\hspace{-3pt}}
\newaliascnt{proposition}{theorem}
\newaliascnt{lemma}{theorem}
\newtheorem{lemma}[lemma]{Lemma}
\newaliascnt{corollary}{theorem}
\theoremstyle{definition}
\newaliascnt{definition}{theorem}
\newaliascnt{remark}{theorem}
\newtheorem{remark}[remark]{Remark}
\newaliascnt{example}{theorem}
\def\rmd{\mathrm{d}}
\def\1{\mathbbm{1}}
\numberwithin{equation}{section}
\theoremstyle{plain}
\begin{document}

\begin{frontmatter}
\title{On the contraction properties of some high-dimensional quasi-posterior distributions}
\runtitle{high-dimensional quasi-posterior distributions}

\begin{aug}
\author{\fnms{Yves F.} \snm{Atchad\'e}\thanksref{t1}\ead[label=e1]{yvesa@umich.edu}}

\thankstext{t1}{Supported in part by NSF grant DMS 1228164 and DMS 1513040}
\runauthor{Yves F. Atchad\'e}

\affiliation{University of Michigan}

\address{Yves F. ATCHAD\'E\\
University of Michigan\\
Department of Statistics\\
1085 South  University\\
Ann Arbor, 48109, MI, United States\\
\printead{e1}
}

\end{aug}

\begin{abstract}
We study the contraction properties of a quasi-posterior distribution $\check\Pi_{n,d}$ obtained by combining a quasi-likelihood function and a sparsity inducing prior distribution on $\rset^d$, as both $n$ (the sample size), and $d$ (the dimension of the parameter) increase. We derive some general results that highlight a set of sufficient conditions under which $\check\Pi_{n,d}$ puts increasingly high probability on sparse subsets of $\rset^d$, and contracts towards  the true value of the parameter.  We apply these results to the analysis of logistic regression models, and  binary graphical models, in high-dimensional settings. For the logistic regression model, we shows that for well-behaved design matrices, the posterior distribution contracts at the rate $O(\sqrt{s_\star\log(d)/n})$, where $s_\star$ is the number of non-zero components of the parameter. For the binary graphical model, under some regularity conditions, we show that a quasi-posterior analog of the neighborhood selection of \cite{meinshausen06} contracts in the Frobenius norm at the rate $O(\sqrt{(p+S)\log(p)/n})$, where $p$ is the number of nodes, and $S$ the number of edges of the true graph.
\end{abstract}

\begin{keyword}[class=MSC]
\kwd[Primary ]{62F15}
\kwd{62Jxx}
\end{keyword}

\begin{keyword}
\kwd{Quasi-Bayesian inference}
\kwd{High-dimensional inference}
\kwd{Bayesian Asymptotics}
\kwd{Logistic regression models}
\kwd{Discrete graphical models}
\end{keyword}

\end{frontmatter}

\section{Introduction}\label{sec:intro}
Let $\Xset^{(n)}$ denote a sample space equipped with a reference sigma-finite measure denoted $\rmd z$. The upper script $n$ represents the sample size. Let $Z$ be a $\Xset^{(n)}$-valued random variable that we model as having distribution $\PP^{(n)}_{\theta}$ given a parameter $\theta\in\rset^d$. We assume that $\PP^{(n)}_{\theta}$ has a density $f_{n,\theta}$: $\PP^{(n)}_{\theta}(\rmd z) =f_{n,\theta}(z)\rmd z$. Let $\Pi$ be a prior distribution on $\rset^d$. The resulting posterior distribution for learning the parameter $\theta$ is the random probability measure 
\[A\mapsto \frac{\int_A f_{n,\theta}(Z)\Pi(\rmd \theta)}{\int_{\rset^d} f_{n,\theta}(Z)\Pi(\rmd \theta)},\;\;A\;\mbox{meas.}\subseteq\rset^d.\]
In practice, many inference problems are best tackled using quasi-likelihood (or pseudo-likelihood) functions. In the Bayesian framework, this leads to a quasi-Bayesian inference. Let $(\theta,z)\mapsto q_{n,\theta}(z)$ denote a jointly measurable function such that  $0<\int_{\rset^d} q_{n,\theta}(z)\Pi(\rmd \theta)<\infty$, almost surely $[\rmd z]$. Substituting $q_{n,\theta}$ in place of $f_{n,\theta}$ yields the quasi-posterior (QP) distribution 
\begin{equation}\label{general:quasi:post}
\check\Pi_{n,d}(A\vert Z)\eqdef \frac{\int_A q_{n,\theta}(Z)\Pi(\rmd \theta)}{\int_{\rset^d} q_{n,\theta}(Z)\Pi(\rmd \theta)},\;\;A\subseteq\rset^d.\end{equation}
Although $\check\Pi_{n,d}$ is not a posterior distribution in the usual sense, it possesses the property that it is a probability distribution obtained by tilting a prior distribution using a likelihood-like function. Hence, to the extent that the quasi-likelihood function $\theta\mapsto q_{n,\theta}(Z)$ contains information about the true value of the parameter $\theta$, one can expect the same from the quasi-posterior distribution (\ref{general:quasi:post}), in which case valid inferential procedures can be derived using $\check\Pi_{n,d}$. This idea is perhaps best seen by noting that  (\ref{general:quasi:post}) is a solution of the minimization 
\[ \min_{\mu\ll\Pi} \left[ -\int_{\rset^d} \log q_{n,\theta}(Z)\mu(\rmd\theta) +\textsf{KL}(\mu\vert \Pi)\right],\]
where $\textsf{KL}(\mu\vert \Pi) \eqdef \int_{\rset^d}\log(\rmd\mu/\rmd\Pi)\rmd\mu$ is the KL-divergence between $\mu$ and $\Pi$, and where the minimization is over all probability measures that are absolutely continuous with respect to the prior $\Pi$. We refer to \cite{zhang:06a} for more details (and in particular to Proposition 5.1 of that paper for a proof of the above statement). The implication of this result is that, under appropriate regularity conditions, one can expect the QP distribution to concentrate around the maximizer of the function $\theta\mapsto\log q_{n,\theta}(Z)$, provided that the prior distribution does not prevent it. The goal of this paper is to formalize this idea for a class of statistical models.

As pointed out to us by a referee, QP distributions are commonly used in the PAC-Bayesian framework to aggregate estimators (\cite{mcallester:99,catoni:01,dalalyan:tsybakov:07,alquier:lounici:11,lounici:14}). However in this literature the emphasis is typically on the estimators, not on the QP distributions themselves. An influential work on quasi-Bayesian procedures is  \cite{chernozhukov:hong03}, which subsequently led to the development of quasi-Bayesian inference in semi-parametric modeling, particularly models arising from moment and conditional moment restrictions (\cite{,liao:jiang:11,yang:he:2012,kato:13,li:jiang:14}). Approximate Bayesian computation (ABC) methods (see e.g. \cite{marin:etal:12} and the references therein) are also  popular quasi-Bayesian procedures.

The present paper is motivated by the idea that quasi-Bayesian inference holds a great potential for dealing with high-dimensional statistical models. For some of these models, a likelihood-based inference is  intractable, and this has impeded somewhat the applicability of the Bayesian framework in this area. However, M-estimation procedures that maximizes various quasi/pseudo-likelihood functions are often readily available.  Using the quasi-Bayesian framework, these quasi-likelihood functions can be easily employed to derive tractable quasi-Bayesian procedures. 

We study the behavior of the QP distribution (\ref{general:quasi:post}) when the prior distribution $\Pi$ is  given by
\begin{equation}\label{prior}
\Pi(\rmd \theta) = \sum_{\delta\in\Delta_d}\pi_\delta \Pi(\rmd \theta\vert\delta),\end{equation}
for a discrete distribution $\{\pi_\delta,\delta\in\Delta_d\}$ on $\Delta_d\eqdef\{0,1\}^d$, and a sparsity inducing prior $\Pi(\rmd \theta\vert\delta)$ on $\rset^d$, that we build as follows. Given $\delta$, the components of $\theta$ are independent, and for $1\leq j\leq d$,
\begin{equation}\label{basic:prior}
\theta_j\vert \delta\sim \left\{\begin{array}{cc} \textsf{Dirac}(0) & \mbox{ if } \delta_j=0\\ \textsf{Laplace}(\rho) & \mbox{ if } \delta_j=1\end{array}\right.,\end{equation}
where $\textsf{Dirac}(0)$ is the Dirac measure on $\rset$ with full mass at $0$, and $\textsf{Laplace}(\rho)$ denotes the Laplace distribution with parameter $\rho>0$. The marginal prior distribution of $\theta_j$ implied by (\ref{basic:prior}) belongs to the class of  spike-and-slab priors (\cite{mitchell:beauchamp:88}).

We work under the assumption that $Z\sim \PP^{(n)}_{\theta_\star}$ for some $\theta_\star\in\rset^d$. When $d$ is assumed fixed and $n\to\infty$, it is known from the initial work of \cite{chernozhukov:hong03} that $\check\Pi_{n,d}$ concentrates around $\theta_\star$, and is asymptotically Gaussian (when properly scaled). Infinite-dimensional extensions of such results have recently been studied (\cite{liao:jiang:11,florens:simoni:12,kato:13}). The present paper focus on the case where $\check \Pi_{n,d}$ arises from a high-dimensional parametric model with the sparsity inducing prior (\ref{prior}-\ref{basic:prior}), and the results that we derive substantially extend previous works by \cite{castillo:etal:14,li:jiang:14}. More precisely, we derive a general result (Theorem \ref{thm2}) that highlights the key determinants that control the convergence and convergence rate of $\check\Pi_{n,d}$ towards $\theta_\star$. The theorem is  obtained by combining ideas from \cite{castillo:etal:14} together with a general methodology for studying high-dimensional M-estimators synthesized in \cite{negahbanetal10}, as well as an important technical result by \cite{kleijn:vaart:06} on the existence of test functions.

We apply these results to the Bayesian analysis of high-dimensional logistic regression models.  We derive a non-asymptotic result (Theorem \ref{thm1:logit}) that shows that for large $d$, and appropriately large sample size $n$, the resulting posterior distribution $\check\Pi_{n,d}$ puts a high probability on sparse subsets of $\rset^d$, and contracts towards the true value of the parameter $\theta_\star$ as $n,d\to\infty$, at the rate
\[O\left(\sqrt{\frac{s_\star\log(d)}{n}}\right),\]
where $s_\star=\|\theta_\star\|_0$. The constant in the big-O notation depends crucially on some smallest restricted eigenvalues of the Fisher information matrix of the model. 

We also apply the results to a quasi-Bayesian inference of high-dimensional binary graphical  models. Discrete graphical models are known to pose significant difficulties due to the intractable nature of the likelihood function. A very successful frequentist approach to deal with large graphical models is the neighborhood selection method of \cite{meinshausen06} initially proposed for Gaussian graphical models, and extended to the Ising model by \cite{ravikumaretal10}. We analyze a quasi-Bayesian version of neighborhood selection applied to binary graphical models. We show that as $n,p\to\infty$ (where $p$ is the number of nodes in the graph), provided that $n$ is sufficiently large, the QP distribution obtained from neighborhood selection contracts towards the true model parameter $\theta_\star$ in the Frobenius norm at the rate
\[O\left(\sqrt{\frac{(p+S)\log(p)}{n}}\right),\]
where $S$ is the number of edges in the graph defined by $\theta_\star$. This convergence rate is the same as in the Gaussian case with a full likelihood inference (\cite{banerjee:ghosal13}), and compares very well with the best existing frequentist results. For instance \cite{sun:zhang:13} shows that the scaled g-Lasso version of neighborhood selection in the Gaussian case converges at the rate $O\left(s_\star\sqrt{\log(d)/n}\right)$ in the spectral norm, where $s_\star$ is the maximum degree of the graph defined by $\theta_\star$. In general, faster convergence rate can be achieved if one is only interested in components of the matrix.  To illustrate this we analyze the contraction of $\check\Pi_{n,d}$ in the norm $\tnorm{\theta}\eqdef \max_j \|\theta_{\cdot j}\|_2$, where $\theta_{\cdot j}$ is the $j$-th column of $\theta$. We show that in this norm, the QP distribution obtained from neighborhood selection contracts towards $\theta_\star$ at the rate
\[O\left(\sqrt{\frac{s_\star\log(p)}{n}}\right),\] 
where here $s_\star$ is the maximum degree of the graph defined by the true parameter $\theta_\star$. Furthermore, the sample size $n$ required for this result to hold is milder, and comparable to the sample size requirement in simple high-dimensional logistic regressions.

An important issue not addressed in this work is how to obtain Monte Carlo samples from the QP distribution (\ref{general:quasi:post}). It is well known that posterior and quasi-posterior distributions built from discrete-continuous mixture priors as in (\ref{prior})-(\ref{basic:prior}) are computational difficult to handle with standard Markov Chain Monte Carlo algorithms. However there has been some recent progress, including the STMaLa of \cite{schrecketal14}, or the Moreau approximation approach of the author developed in \cite{atchade:15a}. We point the reader to these works for more details and some additional references. Further discussion of computational methods can be in \cite{castillo:etal:14}.

The remainder of the paper is organized as follows. First we close the introduction with some notation that will be used throughout the paper. Section \ref{sec:contract} develops a general analysis of the QP distribution  $\check\Pi_{n,d}$. The applications to logistic regression models and binary graphical models is discussed in Section \ref{sec:logit}. The proof of Theorem \ref{thm2} is presented in Section \ref{sec:proofs}, while the remaining proofs are gathered in the supplementary material \cite{suppl}.

\subsection{Notation}\label{sec:notation}
For an integer $d\geq 1$, we equip the Euclidean space $\rset^d$ with its usual Euclidean inner product $\pscal{\cdot}{\cdot}$, associated norm $\|\cdot\|_2$, and its Borel sigma-algebra. We set $\Delta_d\eqdef\{0,1\}^d$.  We will also use the following norms on $\rset^d$: $\|\theta\|_1\eqdef \sum_{j=1}^d|\theta_j|$, $\|\theta\|_0\eqdef\sum_{j=1}^d \textbf{1}_{\{|\theta_j|>0\}}$, and $\|\theta\|_\infty\eqdef \max_{1\leq j\leq d}|\theta_j|$.

For $\delta\in\Delta_d$, $\mu_{d,\delta}$ denotes the product measure  on $\rset^d$ defined as 
\[\mu_{d,\delta}(\rmd \theta)\eqdef \prod_{j=1}^d\nu_{\delta_{j}}(\rmd \theta_{j}),\]
 where  $\nu_{0}(\rmd z)$ is the Dirac mass at $0$, and $\nu_1(\rmd z)$ is the Lebesgue measure on $\rset$. For $\theta,\theta'\in\rset^d$, $\theta\cdot\theta'\in\rset^d$ denotes the component-wise product of $\theta$ and $\theta'$:  $(\theta\cdot\theta')_j=\theta_j\theta_j'$, $1\leq j\leq d$. And for $\delta\in\Delta_d$, we set $\delta^c\eqdef1-\delta$, that is $\delta_j^c\eqdef1-\delta_j$, $1\leq j\leq d$. For $\theta\in\rset^d$, the sparsity structure of $\theta$ is the element $\delta\in\Delta_d$ defined as $\delta_j=\textbf{1}_{\{|\theta_j|>0\}}$, $1\leq j\leq d$.

Throughout the paper $e$ denotes the Euler number, and ${m \choose q}$ is the combinatorial number $m!/(q!(m-q)!)$. For $x\in\rset$, the notation $\lceil x\rceil$ represents the smallest integer larger of equal to $x$, and $\textsf{sign}(x)$ is the sign of $x$ ($\textsf{sign}(x)=1$ if $x>0$, $\textsf{sign}(x)=-1$ if $x<0$, and $\textsf{sign}(x)=0$ if $x=0$). Finally, for $\theta\in\rset^d$, and $A\subset\rset^d$, $\theta+A\eqdef\{\theta+u,\;u\in A\}$.

\section{Contraction properties of the quasi-posterior distribution $\check\Pi_{n,d}$}\label{sec:contract}
We consider the QP distribution (\ref{general:quasi:post}) on $\rset^d$, with the prior distribution (\ref{prior}-\ref{basic:prior}). Using the notation of Section \ref{sec:notation}, $\check \Pi_{n,d}$ can be written as 
\begin{equation}\label{quasi:post}
\check\Pi_{n,d}(\rmd \theta\vert  Z)\propto q_{n,\theta}(Z) \sum_{\delta\in\Delta_d}  \pi_\delta\left(\frac{\rho}{2}\right)^{\|\delta\|_0}e^{-\rho\|\theta\|_1} \mu_{d,\delta}(\rmd\theta).
\end{equation}

We are interesting in the contraction behavior of $\check\Pi_{n,d}$ for large $n,d$.  We take the usual frequentist view of Bayesian procedures by assuming the following.

\begin{assumption}\label{H0}
There exists $\theta_\star\in\rset^d$ such that $Z\sim \PP^{(n)}_{\theta_\star}(\rmd z)= f_{n,\theta_\star}(z)\rmd z$.
\end{assumption}

We write $\PE^{(n)}$ for the expectation operator with respect to $\PP^{(n)}_{\theta_\star}(\rmd z)$.  We also make the basic assumption that the quasi-likelihood function is log-concave and smooth, and we use the notation $\nabla \log q_{n,u}(z)$ to denote the derivative of the map $\theta\mapsto \log q_{n,\theta}(z)$ at $u$. The $j$-th component of $\nabla \log q_{n,u}(z)$ is written as $(\nabla \log q_{n,u}(z))_j$. 

\begin{assumption}\label{H1}
For all $z\in\Xset^{(n)}$, the map $\theta\mapsto \log q_{n,\theta}(z)$ is concave and differentiable.
\end{assumption}

\medskip
\begin{remark}
The assumption that the function $\theta\mapsto \log q_{n,\theta}(z)$ is concave is imposed mostly for simplicity, and is not crucial to derive the main result (Theorem \ref{thm2}). In fact, this assumption is not used in Theorem \ref{thm2}-(2). However, in the application of Theorem \ref{thm2}, concavity is typically crucial to control the events $\e_n$  that appear in the theorem. 
\end{remark}


Following \cite{castillo:etal:14}, we specify the prior $\{\pi_\delta,\;\delta\in\Delta_d\}$ as follows.
\begin{assumption}\label{H2}
For all $\delta\in\Delta_d$, $\pi_\delta = g_{\|\delta\|_0} {d\choose \|\delta\|_0}^{-1}$, for a discrete distribution $\{g_s,\;0\leq s\leq d\}$, for which there exist positive universal constants $c_1,c_2$, $c_3\geq c_4$  such that
\begin{equation}\label{eq:H2}
\frac{c_1}{d^{c_3}}g_{s-1} \leq g_s\leq \frac{c_2}{d^{c_4}}g_{s-1},\;s = 1,\ldots,d.\end{equation}
\end{assumption}
\begin{remark}\label{rem:pi:delta}
This assumption guarantees that the prior distribution concentrates on sparse subsets of $\rset^d$. Note that $\{g_s\}$ is the distribution of the number of non-zero components produced by the prior. The assumption in (\ref{eq:H2}) guarantees that for $d$ large enough so that $\frac{c_2}{d^{c_4}}<1$, we have $g_s\leq (\frac{c_2}{d^{c_4}})^s g_0$, and the rate $\frac{c_2}{d^{c_4}}$ gets smaller with $d$.

\cite{castillo:etal:12} has several examples of prior distributions that satisfy H\ref{H2}. For instance if, for some hyper-parameter $u>1$, $\textsf{q}\sim \textbf{Beta}(1,d^u)$, and given $\textsf{q}$, we draw independently $\delta_j\sim\textsf{Ber}(q)$, then the marginal distribution of $\delta$ in this case satisfies H\ref{H2}, with $c_1=1/2$, $c_2=1$, $c_3 = u$ and $c_4=u-1$.
\end{remark}

We study the contraction properties of $\check\Pi_{n,d}$ towards $\theta_\star$. 
We  borrow a strategy developed mostly for the analysis of high-dimensional M-estimators, that consists in identifying a ``good" subset $\e_n$ of the sample space $\Xset^{(n)}$ on which the map $\theta\mapsto q_{n,\theta}(Z)$ has good curvature properties (see e.g. \cite{negahbanetal10} for an excellent presentation of these ideas). Using this idea, the task at hand then boils down to controlling the probability of the set $\e_n$ and showing that $\check\Pi_{n,d}$ has good contraction properties when $Z\in\e_n$. To that end, and to shorten notation, we introduce the (Bregman divergence) function 
\[\L_{n,\theta}(z) \eqdef  \log q_{n,\theta}(z)-\log q_{n,\theta_\star}(z)-\pscal{\nabla \log q_{n,\theta_\star}(z)}{\theta-\theta_\star},\;\;\theta\in\rset^d,\;z\in\Xset^{(n)}.\]
This function plays a key role in informing on the curvature of the objective function $\theta\mapsto\log q_{n,\theta}(Z)$ around $\theta_\star$. However, in high-dimensional settings, it is typically not realistic to assume that $\theta\mapsto\log q_{n,\theta}(Z)$  has good curvature on the entire parameter space $\rset^d$. As well explained in \cite{negahbanetal10}, one should look at restrictions of  $\L_{n,\theta}(z)$ to interesting subsets of $\rset^d$.
  
We will use a rate function to express the curvature of $\theta\mapsto\log q_{n,\theta}(Z)$. Throughout the paper, a continuous function $\r:\;[0,\infty)\to [0,\infty)$ is a rate function if $\r$ is strictly increasing, $\r(0)=0$, and $\lim_{x\downarrow 0}\r(x)/x=0$. Given a rate function $\r$, and $a\geq 0$, we define 
\begin{equation}\label{phir}
\phi_\r(a)\eqdef\inf\{x>0:\; \r(z)\geq a z,\; \mbox{for all } z\geq x\},\end{equation}
 with the convention that $\inf\emptyset =+\infty$. The main example of a rate function is  $\r(x) = \tau x^2$, for some $\tau>0$ (for linear regression problems). However, the examples below are related to logistic regression and the rate function $\r(x)=\tau x^2/(1+bx)$ is used.
 
A non-empty subset $\Theta$ of $\rset^d$ is a cone if for all $\lambda\geq 0$, and all $x\in\Theta$, $\lambda x\in\Theta$.  We will say that a cone $\Theta$ is a split cone if $u\cdot x\in\Theta$ for all $x\in\Theta$, and all $u\in\{-1,1\}^d$ (we recall that the notation $u\cdot x$ denotes the component-by-component product). Split cones serve as generalizations of sparse  subsets of $\rset^d$. The archetype example of a split cone is the set of $s$-sparse elements: $\{\theta\in\rset^d:\;\|\theta\|_0\le s\}$. However in some problems, one might have to work with sparse elements with some additional structure, and this motivates the introduction of the split cones. A particularly important example of a split cone is the set of elements of $\rset^d$ with the same sparsity structure as $\theta_\star$:
\begin{equation}\label{Theta_star}
\Theta_\star \eqdef \left\{\theta\in\rset^d:\; \theta_j=0 \mbox{ for all  $j$ s.t. } \theta_{\star j}=0\right\}.
\end{equation}

Another important example of split cone that we will use is the set 
\begin{equation*}
\N\eqdef\left\{\theta\in\rset^d:\; \theta\neq 0,\;\mbox{ and }\; \sum_{j:\;\delta_{\star j}=0}|\theta_j|\leq 7\|\theta\cdot\delta_\star\|_1\right\},\end{equation*}
where $\delta_\star$ denote the sparsity structure of $\theta_\star$: $\delta_{\star j}=\textbf{1}_{\{|\theta_{\star j}|\neq 0\}}$, $1\leq j\leq d$. 

\medskip
Given a rate function $\r$, and a split cone $\Theta\subseteq\rset^d$, we set
\begin{equation}\label{def:e21}
\check \e_{n,1}(\Theta,\r)\eqdef \left\{z\in \Xset^{(n)}:\; \mbox{ for all } \theta\in\theta_\star + \Theta,\; \L_{n,\theta}(z) \leq -\frac{1}{2} \r(\nnorm{\theta-\theta_\star})\right\}.\end{equation}

Here as in classical Bayesian asymptotics, in order to control the normalizing constant of the quasi-posterior distribution, we need a lower bound on the function $\theta\mapsto \L_{n,\theta}(z)$. Again, a restricted version will suffice. For $L\geq 0$, we set
\begin{equation}\label{def:e22}
\hat \e_{n,1}(\Theta,L)\eqdef \left\{z\in \Xset^{(n)}:\; \mbox{ for all } \theta\in\theta_\star + \Theta,\; \L_{n,\theta}(z) \geq -\frac{L}{2} \|\theta-\theta_\star\|_2^2\right\}.\end{equation}
Finally, for $\lambda>0$ we set
\begin{equation}\label{def:e0}
\e_{n,0}(\Theta,\lambda)\eqdef \left\{z\in\Xset^{(n)}:\; \sup_{u\in\Theta,\;\nnorm{u}=1} \left|\pscal{\nabla\log q_{n,\theta_\star}(z)}{u}\right| \leq \frac{\lambda}{2}\right\}.\end{equation}
The main idea behind these definitions is that  on the event $\{Z\in \hat \e_{n,1}(\Theta,L)\cap \check \e_{n,1}(\Theta,\r)\}$ the quasi-log-likelihod function  $\theta\mapsto \log q_{n,\theta}(Z)$ has very nice curvature properties when restricted to the set $\theta_\star + \Theta$. 
The definition of $\e_{n,0}(\Theta,\lambda)$ implies that on the event $\{Z\in\e_{n,0}(\Theta,\lambda)\}$, $\theta_\star$ is close to the maximizer of the map $\theta\mapsto \log q_{n,\theta}(Z)$. Hence the set $\e_{n,0}(\Theta,\lambda)\cap \hat \e_{n,1}(\Theta,L)\cap \check \e_{n,1}(\Theta,\r)$ is our example of a ``good set", and on that set, we expect $\check\Pi_{n,d}(\cdot\vert Z)$ to have good concentration properties around $\theta_\star$. This is the substance of the next result. Before stating the main theorem, we introduce few more notation. For $M>0$, let $\textsf{B}_d(\Theta,M)\eqdef\{\theta\in\theta_\star + \Theta,\;\mbox{ s.t. }\; \nnorm{\theta-\theta_\star}\leq M\}$. For $\epsilon>0$, let $\textsf{D}(\epsilon,\textsf{B}_d(\Theta,M))$ denote the $\epsilon$-packing number of the ball $\textsf{B}_d(\Theta,M)$, defined as the maximal number of points in $\textsf{B}_d(\Theta,M)$ such that the $\nnorm{\cdot}$-distance between any pair of such points is at least $\epsilon$. 

\begin{theorem}\label{thm2}
Assume H\ref{H0}-H\ref{H2}, and set $s_\star\eqdef\|\theta_\star\|_0$. Suppose that $d$ is such that $d^{c_4}\geq 8c_2$. Let  $\bar\Theta\supseteq\Theta_\star$ be a split cone, $\bar L\geq 0,\bar\lambda\geq 0$, and a rate function $\r$ be such that $\bar\epsilon\eqdef \phi_\r\left(2\bar\lambda\right)$ is finite.
\begin{enumerate}
\item Set $\e_n\eqdef \e_{n,0}(\rset^d,\rho)\cap\hat\e_{n,1}(\Theta_\star,\bar L)\cap\check\e_{n,1}(\N,\r)$. Then for any integer $k\geq 0$,
\begin{multline}\label{bound:thm1}
\PE^{(n)}\left[\check\Pi_{n,d}\left(\left\{\theta\in\rset^d:\;\|\theta\|_0\geq s_\star +k\right\}\vert Z\right)\right]
\leq \PP^{(n)}\left[Z\notin \e_{n}\right] \\
+  2e^{\textsf{a}}\left(4+\frac{4\bar L}{\rho^2}\right)^{s_\star} {d\choose s_\star}\left(\frac{4c_2}{d^{c_4}}\right)^k, \end{multline}
where $\textsf{a}=-\frac{1}{2} \inf_{x> 0} \left[\r(x) -4\rho\sqrt{s_\star} x\right]$, if $\N\neq \emptyset$, and $\textsf{a}=0$ if $\N=\emptyset$.
\item  Set $\e_n \eqdef \e_{n,0}(\bar\Theta,\bar\lambda)\cap \hat\e_{n,1}(\Theta_\star,\bar L)\cap\check\e_{n,1}(\bar \Theta,\r)$. For any  $M_0> 2$, 
\begin{multline}\label{thm2:maineq}
\PE^{(n)}\left[\check\Pi_{n,d}\left(\left\{\theta\in\theta_\star+\bar\Theta:\;\nnorm{\theta-\theta_\star} >M_0\bar\epsilon\right\}\vert Z\right)\right]\leq  \PP^{(n)}\left[Z\notin \e_{n}\right] \\
+ \sum_{j\geq 1}\textsf{D}_je^{-\frac{1}{8}\r(\frac{j M_0 \bar\epsilon}{2})}
+2  {d\choose s_\star}\left(\frac{d^{c_3}}{c_1}\right)^{s_\star}\left(1+\frac{\rho^2}{\bar L}\right)^{s_\star}  \sum_{j\geq 1}e^{-\frac{1}{8}\r(\frac{j M_0 \bar\epsilon}{2})} e^{3\rho c_0 j M_0\bar\epsilon},
\end{multline}
where  $\textsf{D}_j \eqdef \textsf{D}\left(\frac{j M_0\bar \epsilon}{2},\textsf{B}_d(\bar\Theta,(j+1)M_0\bar\epsilon)\right)$, and where \\
$c_0 \eqdef \sup_{u\in\bar\Theta}\sup_{v\in\bar \Theta,\;\nnorm{v} = 1} |\pscal{\textsf{sign}(u)}{v}|$.
\end{enumerate}
\end{theorem}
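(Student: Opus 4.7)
The overall strategy is a ratio bound: on the good event $\e_n$, I upper bound the numerator $N(Z) \eqdef \int_{\text{bad}} q_{n,\theta}(Z)\Pi(\rmd\theta)$ and lower bound the normalizing constant $D(Z) \eqdef \int q_{n,\theta}(Z)\Pi(\rmd\theta)$; off $\e_n$ the quasi-posterior mass is bounded by $1$, producing the $\PP^{(n)}[Z\notin\e_n]$ term. The common denominator bound in both parts comes from restricting the prior integration to the single sparsity pattern $\delta=\delta_\star$, which by H\ref{H2} carries mass $\pi_{\delta_\star}\gtrsim (c_1/d^{c_3})^{s_\star}\binom{d}{s_\star}^{-1}g_0$. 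On $\hat\e_{n,1}(\Theta_\star,\bar L)$ I use $\L_{n,\theta}(Z)\geq -\tfrac{\bar L}{2}\|\theta-\theta_\star\|_2^2$, combine with the gradient control from $\e_{n,0}$ and the Laplace factor $e^{-\rho\|\theta\|_1}$, and evaluate the resulting Gaussian--Laplace integral on $\theta_\star+\Theta_\star$ to produce a factor of order $(1+\rho^2/\bar L)^{-s_\star/2}$.

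For Part (1), I decompose the bad set $\{\|\theta\|_0\geq s_\star+k\}$ by the exact sparsity structure $\delta$. Assumption H\ref{H2} yields the geometric prior-ratio $\pi_\delta/\pi_{\delta_\star}\leq (c_2/d^{c_4})^{\|\delta\|_0-s_\star}\binom{d}{s_\star}/\binom{d}{\|\delta\|_0}$, and under $d^{c_4}\geq 8c_2$ the sum over $s=\|\delta\|_0\geq s_\star+k$ collapses to a geometric series producing the $(4c_2/d^{c_4})^k$ factor. To bound each inner integral $\int q_{n,\theta}(Z)\,\mu_{d,\delta}(\rmd\theta)$, I use the Bregman upper bound on $\log q_{n,\theta}(Z)-\log q_{n,\theta_\star}(Z)$ and log-concavity of $q_{n,\cdot}(Z)$ to reduce integration to the split cone $\N$, where $\check\e_{n,1}(\N,\r)$ gives $\L_{n,\theta}(Z)\leq -\tfrac{1}{2}\r(\nnorm{\theta-\theta_\star})$. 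On $\N$ one has $\|\theta-\theta_\star\|_1\leq 8\|(\theta-\theta_\star)\cdot\delta_\star\|_1\leq 8\sqrt{s_\star}\nnorm{\theta-\theta_\star}$, so combining with the gradient bound on $\e_{n,0}(\rset^d,\rho)$ the exponent in the integrand is at most $-\tfrac{1}{2}\r(x)+4\rho\sqrt{s_\star}x$ at $x=\nnorm{\theta-\theta_\star}$, whose supremum is $\textsf{a}$.

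For Part (2), I follow the Kleijn--van der Vaart shell/testing strategy. I slice the bad set into shells $S_j=\{jM_0\bar\epsilon<\nnorm{\theta-\theta_\star}\leq (j+1)M_0\bar\epsilon\}$ inside $\theta_\star+\bar\Theta$, cover $S_j$ by a minimal $\tfrac{jM_0\bar\epsilon}{2}$-packing of cardinality $\textsf{D}_j$, and construct around each center a likelihood-ratio test whose error, on $\check\e_{n,1}(\bar\Theta,\r)\cap\e_{n,0}(\bar\Theta,\bar\lambda)$, is bounded by $e^{-\r(jM_0\bar\epsilon/2)/8}$; here the choice $\bar\epsilon=\phi_\r(2\bar\lambda)$ ensures that the gradient-induced linear term is absorbed by $\tfrac{1}{2}\r(\cdot)$. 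The first sum in \eqref{thm2:maineq} is then the total testing error. The second sum controls the contribution from sparsity patterns $\delta\neq\delta_\star$ in the numerator: summing $\pi_\delta$ against the denominator lower bound produces the $\binom{d}{s_\star}(d^{c_3}/c_1)^{s_\star}(1+\rho^2/\bar L)^{s_\star}$ combinatorial factor, and the $\ell_1$-penalty discrepancy $|\rho\|\theta\|_1-\rho\|\theta_\star\|_1|$ is bounded on the shell by $\rho c_0\nnorm{\theta-\theta_\star}$ using the sign-based definition of $c_0$, with the factor $3$ accounting for three occurrences of this estimate along the argument (the two $\ell_1$ terms and the tangent-line estimate in the test construction).

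The hard part is Part (1): making precise the reduction from arbitrary sparsity patterns to the cone $\N$ via log-concavity, so that the quadratic growth on $\N$ together with the Laplace prior controls the integrand uniformly. This requires showing that for any $\theta$ with $\theta-\theta_\star\notin\N$, the combined effect of the $\ell_1$ prior penalty $-\rho\|\theta\|_1$ and the gradient bound on $\e_{n,0}(\rset^d,\rho)$ forces the integrand to decay, so that by concavity the worst-case bound is attained on $\N$ itself. Part (2) is more routine once $\bar\epsilon$ is chosen to balance $\r$ against the linear noise $\bar\lambda$, but careful bookkeeping of the $2^d$ sparsity patterns is needed to avoid losing an exponential factor.
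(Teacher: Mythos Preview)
Your overall architecture matches the paper's: ratio bound on the good event, denominator controlled by restricting to $\delta=\delta_\star$, numerator in Part~(1) handled by the $\N/\N^c$ dichotomy (concavity outside $\N$, the $\r$-bound inside), and Part~(2) by the Kleijn--van~der~Vaart shell/test construction. Three points need correction.

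\textbf{Denominator.} The paper's lower bound on the normalizing constant does \emph{not} use $\e_{n,0}$. On $\hat\e_{n,1}(\Theta_\star,\bar L)$ one writes $q_{n,\theta}/q_{n,\theta_\star}\geq e^{\langle\vartheta,\theta-\theta_\star\rangle-\frac{\bar L}{2}\|\theta-\theta_\star\|_2^2}$ with $\vartheta=\nabla\log q_{n,\theta_\star}(z)$, and the gradient term is removed by Jensen: since the Laplace--Gaussian weight on $\Theta_\star$ is symmetric about $0$, $\int e^{\langle\vartheta,u\rangle}\rmd\mu(u)\geq e^{\int\langle\vartheta,u\rangle\rmd\mu(u)}=1$. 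This yields the clean factor $(\rho^2/(\bar L+\rho^2))^{s_\star}$ via a Mills-ratio computation, and explains why the denominator bound needs only $\hat\e_{n,1}$.

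\textbf{Part (1): the leftover $\ell_1$-decay.} Your sketch bounds the exponent on $\N$ by $-\tfrac12\r(x)+4\rho\sqrt{s_\star}x\leq\textsf{a}$ and stops. But then the integrand on the support of $\mu_{d,\delta}$ is only bounded by a constant and the integral diverges. The paper does not absorb the entire gradient-plus-prior combination into $\textsf{a}$: it sets $B(\theta)=\tfrac{\rho}{2}\|\theta-\theta_\star\|_1+\rho(\|\theta_\star\|_1-\|\theta\|_1)$, shows $B(\theta)\leq -\tfrac{\rho}{2}\|\theta-\theta_\star\|_1+2\rho\|(\theta-\theta_\star)\cdot\delta_\star\|_1$, and only the last piece (at most $2\rho\sqrt{s_\star}x$) is played against $\tfrac12\r(x)$ to produce $\textsf{a}$. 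The retained $-\tfrac{\rho}{2}\|\theta-\theta_\star\|_1$ (respectively $-\tfrac{\rho}{4}\|\theta-\theta_\star\|_1$ off $\N$, from concavity alone) is precisely what makes $(\rho/2)^{\|\delta\|_0}\int e^{-\frac{\rho}{4}\|\theta-\theta_\star\|_1}\mu_{d,\delta}(\rmd\theta)\leq 4^{\|\delta\|_0}$ finite and gives the $(4c_2/d^{c_4})^k$ tail after summing via H\ref{H2}. Without this retained term your argument does not close.

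\textbf{Part (2): the second sum and the factor $3$.} The second series in \eqref{thm2:maineq} is not the contribution of patterns $\delta\neq\delta_\star$; it comes from bounding $e^{\rho\|\theta_\star\|_1}\Pi(\textsf{B}(j))$ with the \emph{full} prior (sum over all $\delta$), then using both inequalities of H\ref{H2} to get $\pi_{\delta_\star}^{-1}\sum_\delta\pi_\delta 2^{\|\delta\|_0}\leq 2\binom{d}{s_\star}(d^{c_3}/c_1)^{s_\star}$. The factor $3$ is not ``three occurrences'' of the sign estimate: it arises from the splitting $\rho\|\theta-\theta_\star\|_1=-\tfrac{\rho}{2}\|\theta-\theta_\star\|_1+\tfrac{3\rho}{2}\|\theta-\theta_\star\|_1$, the first piece being kept for the Laplace integral $(\rho/2)^{\|\delta\|_0}\int e^{-\frac{\rho}{2}|z|}\rmd z=2^{\|\delta\|_0}$ and the second bounded on the $j$th shell by $\tfrac{3\rho}{2}c_0\nnorm{\theta-\theta_\star}\leq 3\rho c_0 jM_0\bar\epsilon$.
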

\begin{proof}
See Section \ref{sec:proof:thm2}.
\end{proof}


Theorem \ref{thm2}-Part(1) shows that for $\rho$, $\bar L$ and $\r$ such that the event $\{Z\in \e_{n,0}(\rset^d,\rho)\cap\hat\e_{n,1}(\Theta_\star,\bar L)\cap\check\e_{n,1}(\N,\r)\}$ has high probability, one can use the second term on the right-hand side of (\ref{bound:thm1}) to establish that the concentration of the prior on sparse subsets (as assumed in H\ref{H2}) is inherited by the quasi-posterior distribution.  In the logistic regression example below, we show that the term $e^{\textsf{a}}\left(4+\frac{4\bar L}{\rho^2}\right)^{s_\star}$  is  $O(e^{cs_\star\log(d)})$, for some constant $c$. And since ${d \choose s_\star}\leq e^{s_\star\log(de)}$, it follows that for such models the right-side of (\ref{bound:thm1}) becomes small for $k$ of the order of $(c/c_4) s_\star$. The same is true for  linear regression models (\cite{castillo:etal:14}).

Part (2) of the theorem shows that if $\bar\lambda,\bar L$, the split cone $\bar\Theta$ and the rate function $\r$ are well chosen such that the event $\{Z\in \e_{n,0}(\bar\Theta,\bar\lambda)\cap \hat\e_{n,1}(\Theta_\star,\bar L)\cap\check\e_{n,1}(\bar \Theta,\r)\}$ has high probability, then the convergence rate of the quasi-posterior distribution is controlled mainly by the series $\sum_j e^{-\frac{1}{8}\r(\frac{j M_0 \bar\epsilon}{2})}$, and its dependence on $n,d$. In the examples below, we show how the terms on the right-hand of (\ref{thm2:maineq}) can be handled.

We note that Part (2) of the theorem controls only the probability of the event $\left\{\theta\in\theta_\star+\bar\Theta:\;\nnorm{\theta-\theta_\star} >M_0\bar\epsilon\right\}$ whereas in most applications we typically want the probability of $\left\{\theta\in\rset^d:\;\nnorm{\theta-\theta_\star} >M_0\bar\epsilon\right\}$. As we will show in the examples below, one can use Part (1) of the theorem to upper bound separately the probability of the event $\left\{\theta\notin \theta_\star+\bar\Theta\right\}$.


Finally, we point out that the upper bounds in (\ref{bound:thm1}) and (\ref{thm2:maineq}) depends in general on $\theta_\star$, typically through $\bar L$ and the rate function $\r$. These terms essentially model the curvature of $\theta\mapsto \log q_{n,\theta}(Z)$ around $\theta_\star$. Our setting thus differs from the linear regression setting where the curvature of $\theta\mapsto \log q_{n,\theta}(Z)$ is constant, and the resulting posterior concentration bounds are uniform in $\theta_\star$ (\cite{castillo:etal:14} Theorem 1 and 2).


\section{Sparse Bayesian logistic regression}\label{sec:logit}
As a first application we study the contraction behavior of a posterior distribution obtained from a high-dimensional logistic regression model, for large values of the sample size $n$ and the dimension $d$. Suppose that $Z_1,\ldots,Z_n$ are independent $0$-$1$ binary random variables and we consider the model
\[\PP(Z_i=1) = \frac{e^{\pscal{x_i}{\theta}}}{1+e^{\pscal{x_i}{\theta}}},\]
for a parameter $\theta\in\rset^d$, where $x_i\in\rset^d$ is a known vector of covariates.
Writing $z=(z_1,\ldots,z_n)$, the likelihood function is then
\[q_{n,\theta}(z) = \exp\left(\sum_{i=1}^n z_i\pscal{x_i}{\theta}-g\left(\pscal{x_i}{\theta}\right)\right),\]
where 
\[g(x) \eqdef  \log(1+e^x),\;\;x\in\rset.\]
Using the  prior distribution given in (\ref{prior})-(\ref{basic:prior}), we consider the posterior distribution
\begin{equation}\label{post:log}
\check\Pi_{n,d}(\rmd\theta\vert Z)\propto \exp\left(\sum_{i=1}^n Z_i\pscal{x_i}{\theta}-g\left(\pscal{x_i}{\theta}\right)\right) \sum_{\delta\in\Delta} \pi_\delta \left(\frac{\rho}{2}\right)^{\|\delta\|_1}e^{-\rho\|\theta\|_1} \mu_{d,\delta}(\rmd\theta).\end{equation}
We make the following assumption that implies H\ref{H0}.
\begin{assumptionB}\label{B1}
$Z_1,\ldots,Z_n$ are independent $0$-$1$ binary random variables, and there exist $\theta_\star\in \rset^d$, $x_1,\ldots,x_n\in\rset^d$, such that
\[\PP(Z_i=1) = \frac{e^{\pscal{x_i}{\theta_\star}}}{1+e^{\pscal{x_i}{\theta_\star}}},\;i=1,\ldots,n.\] \end{assumptionB}

Let $X\in\rset^{n\times d}$ denote the design matrix, where the $i$-th row of $X$ is given by the transpose of $x_i$. We shall write $g'$, and $g^{(2)}$ to denote the first and second derivatives of $g$. Let $W\in\rset^{n\times n}$ be the diagonal matrix with $i$-th diagonal entry given by
\[W_i = g^{(2)}\left(\pscal{x_i}{\theta_\star}\right),\;\;i=1,\ldots,n.\]
We define
\[\underline{\kappa}_1 \eqdef \inf \left\{\frac{\theta'(X'WX)\theta}{n\|\theta\|_2^2}:\;\theta\in\rset^d\setminus\{0\},\;\|\theta\cdot\delta_\star^c\|_1\leq 7\|\theta\cdot\delta_\star\|_1\right\}.\]
For $s\in\{1,\ldots,d\}$,  we define
\begin{multline*}
\bar\kappa_1(s) \eqdef\sup\left\{\frac{\theta'(X'X)\theta}{n\|\theta\|_2^2}:\;\;1\leq \|\theta\|_0\leq s\right\},\;\;\\
\mbox{ and }\;\; \underline{\kappa}_1(s) \eqdef \inf \left\{\frac{\theta'(X'WX)\theta}{n\|\theta\|_2^2}:\;\;1\leq \|\theta\|_0\leq s\right\}.\end{multline*}

We choose the regularization parameter $\rho$ in the prior distribution  (\ref{basic:prior}) as
\begin{equation}\label{rho:logit}
\rho \eqdef  4\|X\|_\infty\sqrt{n\log(d)},
\end{equation}
where $\|X\|_\infty\eqdef \max_{i,j}|X_{ij}|$. We note that $\bar\kappa_1(1) \leq \|X\|_\infty^2$, and $\underline{\kappa}_1(s)\leq \bar\kappa_1(1)/4$, for all $s\geq 1$.

\begin{theorem}\label{thm1:logit}
Assume B\ref{B1} and H\ref{H2}. Choose $\rho$ as in (\ref{rho:logit}). Set $s_\star\eqdef \|\theta_\star\|_0$,
\begin{equation}\label{eq:zeta:logit}
\zeta \eqdef s_\star + \frac{2}{c_4} +\frac{2}{c_4}\left(1 + \frac{64\|X\|_\infty^2}{\underline{\kappa}_1} +\frac{\bar\kappa(s_\star)}{64\|X\|_\infty^2(\log(d))^2}+\frac{\log(4e)}{\log(d)}\right)s_\star,\end{equation}
and $\bar s\eqdef \lceil s_\star + \zeta\rceil$.
If $\underline{\kappa}\eqdef\min(\underline{\kappa}_1,\underline{\kappa}_1(\bar s))>0$, then there exists a universal constant $A<\infty$ such that for all $d$ large enough, and
\begin{equation}\label{cond:nd:logistic:part1}n\geq A\|X\|_\infty^4 \left(\frac{s_\star}{\underline{\kappa}}\right)^2\log(d),\end{equation}
the following statements hold.
\begin{enumerate}
\item
\[\PE^{(n)}\left[\check\Pi_{n,d}\left(\left\{\theta\in\rset^d:\;\|\theta\|_0\geq \zeta\right\}\vert Z\right)\right] \leq \frac{4}{d}.\]
\item There exists a finite constant $M_0>2$ (that depends only on the constants in H\ref{H2}), such that
\[\PE^{(n)}\left[\check\Pi_{n,d}\left(\left\{\theta\in\rset^d:\;\|\theta-\theta_\star\|_2>\frac{M_0\|X\|_\infty}{\underline{\kappa}_1(\bar s)}\sqrt{\frac{\bar s\log(d)}{n}}\right\}\vert Z\right)\right] \leq \frac{12}{d}.\]
\end{enumerate}
\end{theorem}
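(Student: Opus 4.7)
The plan is to apply both parts of Theorem \ref{thm2} with a rate function tailored to the logistic loss, then combine them to obtain the contraction bound over $\rset^d$. A second-order Taylor expansion of $g(u) = \log(1+e^u)$ about $\pscal{x_i}{\theta_\star}$ gives the exact representation
$$-\L_{n,\theta}(Z) = \sum_{i=1}^n \int_0^1 (1-t)\, g^{(2)}\bigl(\pscal{x_i}{\theta_\star} + t\pscal{x_i}{\theta-\theta_\star}\bigr)\, \pscal{x_i}{\theta-\theta_\star}^2 \, dt,$$
which is deterministic in $\theta$ once $X$ is fixed. Since $g^{(2)}\le 1/4$, the upper bound $-\L_{n,\theta}(Z)\le (n/8)(\theta-\theta_\star)^\top(X^\top X/n)(\theta-\theta_\star)$ is automatic, so $\hat\e_{n,1}(\Theta_\star,\bar L)$ holds on all of $\Xset^{(n)}$ with $\bar L = n\bar\kappa_1(s_\star)/4$.

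The score $\nabla\log q_{n,\theta_\star}(Z)=X^\top(Z-g'(X\theta_\star))$ has bounded, centered entries ($|Z_i - g'(\pscal{x_i}{\theta_\star})|\le 1$), so by Hoeffding plus a union bound over the $d$ coordinates, $\|\nabla\log q_{n,\theta_\star}(Z)\|_\infty \le 2\|X\|_\infty\sqrt{n\log d}$ on an event of $\PP^{(n)}$-probability at least $1-2/d$. On this event, $|\pscal{\nabla\log q_{n,\theta_\star}(Z)}{u}|\le\sqrt{s}\,\|\nabla\log q_{n,\theta_\star}(Z)\|_\infty$ for every $s$-sparse unit $\ell_2$ direction $u$, which yields the event $\e_{n,0}(\rset^d,\rho)$ (as used in Part 1 of Theorem \ref{thm2}, where only sparse directions enter the proof) with the choice (\ref{rho:logit}) of $\rho$, and $\e_{n,0}(\bar\Theta,\bar\lambda)$ with the split cone $\bar\Theta = \{u\in\rset^d:\|u\|_0\le\bar s\}$ and $\bar\lambda\asymp\|X\|_\infty\sqrt{\bar s\, n\log d}$.

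Verifying $\check\e_{n,1}(\N,\r)$ and $\check\e_{n,1}(\bar\Theta,\r)$ is the core technical step. Using the elementary bound $g^{(2)}(a+b)\ge g^{(2)}(a)e^{-|b|}$ together with $|\pscal{x_i}{\theta-\theta_\star}|\le\|X\|_\infty\|\theta-\theta_\star\|_1$ and the identity $\int_0^1(1-t)e^{-at}\,dt \ge 1/(2(1+a))$ for $a\ge 0$, one derives
$$-\L_{n,\theta}(Z)\ge \frac{(\theta-\theta_\star)^\top(X^\top WX)(\theta-\theta_\star)}{c\,\bigl(1+\|X\|_\infty\|\theta-\theta_\star\|_1\bigr)}.$$
Combined with the $\ell_1$-to-$\ell_2$ cone bounds ($\|u\|_1\le C\sqrt{s_\star}\|u\|_2$ on $\N$ from its very definition, and $\|u\|_1\le\sqrt{\bar s}\|u\|_2$ on $\bar\Theta$), this motivates the rate function
$$\r(x)=\frac{n\,\underline{\kappa}\, x^2}{1+b\,\|X\|_\infty\sqrt{\bar s}\,x}.$$
The sample-size condition (\ref{cond:nd:logistic:part1}) is exactly what ensures the small-$x$ (quadratic) regime of $\r$ dominates at $x=\bar\epsilon=\phi_\r(2\bar\lambda)$, giving $\bar\epsilon\asymp(\|X\|_\infty/\underline{\kappa}_1(\bar s))\sqrt{\bar s\log d/n}$, which matches the announced rate.

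To conclude, I invoke Theorem \ref{thm2}-(1): the quantity $\mathsf{a}=-\tfrac12\inf_{x>0}[\r(x)-4\rho\sqrt{s_\star}\,x]$ is minimized at $x\asymp \rho\sqrt{s_\star}/(n\underline{\kappa})$ and evaluates under (\ref{cond:nd:logistic:part1}) to $\mathsf{a}=O(s_\star\log d)$, so the combinatorial factor $\binom{d}{s_\star}(4c_2/d^{c_4})^k$ in (\ref{bound:thm1}) drops below $3/d$ once $k\ge\zeta-s_\star$, yielding Part 1 after adding the $1/d$ from the score event. For Part 2 I invoke Theorem \ref{thm2}-(2) with $\bar\Theta$ the $\bar s$-sparse cone: the packing numbers satisfy $\textsf{D}_j\le\binom{d}{\bar s}(cj)^{\bar s}$ by standard volumetric estimates for sparse balls, and $c_0 \le \sqrt{\bar s}$ here, so both series $\sum_j \textsf{D}_j e^{-\r(jM_0\bar\epsilon/2)/8}$ and $\sum_j e^{-\r(jM_0\bar\epsilon/2)/8} e^{3\rho c_0 j M_0\bar\epsilon}$ are summable for $M_0$ large enough, with the linear (large-$x$) decay of $\r$ overwhelming the polynomial and exponential factors under (\ref{cond:nd:logistic:part1}). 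Since the target event in Part 2 of Theorem \ref{thm1:logit} is over $\rset^d$, I split $\{\|\theta-\theta_\star\|_2>M_0\bar\epsilon\}$ into its intersection with $\theta_\star+\bar\Theta$ (controlled by Theorem \ref{thm2}-(2)) and its complement (controlled by Part 1, since $\theta\notin\theta_\star+\bar\Theta$ forces $\|\theta\|_0\ge\bar s+1\ge\zeta$), producing the bound $12/d$. The hard part will be the RSC step: matching the exponential decay of $g^{(2)}$ far from $\theta_\star$ with the $\ell_1$-$\ell_2$ cone structure to extract the precise $\r$ that makes $\phi_\r$ yield the advertised rate, while simultaneously pinning down the explicit form of $\zeta$ in (\ref{eq:zeta:logit}).
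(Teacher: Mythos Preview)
Your proposal is correct and follows essentially the same route as the paper: apply Theorem~\ref{thm2} with $\bar L=n\bar\kappa_1(s_\star)/4$, a rate function of the form $\r(x)=n\underline\kappa\,x^2/(1+b\sqrt{s}\,\|X\|_\infty x)$, Hoeffding for the score, then handle the two series in (\ref{thm2:maineq}) and split $\rset^d$ via Part~(1). The only cosmetic difference is that the paper obtains the curvature lower bound by quoting Bach's self-concordance inequality $g(x_0+u)-g(x_0)-g'(x_0)u\ge g^{(2)}(x_0)(e^{-|u|}+|u|-1)\ge g^{(2)}(x_0)\,|u|^2/(2+|u|)$, whereas you reach the same bound through the Taylor integral remainder and $g^{(2)}(a+b)\ge g^{(2)}(a)e^{-|b|}$; these are equivalent since $\int_0^1(1-t)e^{-at}\,dt=(e^{-a}+a-1)/a^2$.
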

\begin{proof}
See Section \ref{sec:proof:thm:logit}.
\end{proof}

\medskip
If the dimension $d$ is large, then 
\[\zeta\approx s_\star + \frac{2}{c_4} + \frac{2}{c_4}\left(1 + \frac{64\|X\|_\infty^2}{\underline{\kappa}_1}\right)s_\star.\] 
Therefore, for design matrices $X$ for which the restricted eigenvalues $\underline{\kappa}_1$ and $\underline{\kappa}_1(\bar s)$ of the matrix $n^{-1}X'WX$ are not too small, Theorem \ref{thm1:logit} implies that  most of the probability mass of the posterior distribution is on sparse subsets of $\rset^d$, and the rate of convergence of the posterior distribution (\ref{post:log}) is $O\left(\sqrt{\frac{s_\star\log(d)}{n}}\right)$. The frequentist $\ell^1$-penalized M-estimator for logistic regression has been analyzed by \cite{negahbanetal10} (assuming a random design matrix $X$), and \cite{li:etal:14} (assuming a deterministic design matrix $X$), and is known to converge at the same rate, and under assumptions that are similar to those imposed above. Technically, our approach is closer to \cite{li:etal:14}. The approach of \cite{negahbanetal10} leads to slightly better conditions on the sample size $n$ (they require $n$ to increase linearly in $s_\star$, not quadratically, as in (\ref{cond:nd:logistic:part1})), at the expense of more structure on the design matrix ($X$ is assumed to have i.i.d. rows from a sub-Gaussian distribution and positive definite covariance).

\begin{remark}
As pointed out by a referee, one can use the convergence rate in Theorem \ref{thm1:logit}-Part(2) with an argument used in \cite{castillo:etal:12}~Theorem 2.2 to derive a bound on the convergence rate in the $\ell^q$-norm for $q\in(0,2]$:
\[\PE^{(n)}\left[\check\Pi_{n,d}\left(\left\{\theta\in\rset^d:\;\|\theta-\theta_\star\|_q>\frac{M_0\|X\|_\infty (\bar s)^{\frac{1}{q}}}{\underline{\kappa}_1(\bar s)}\sqrt{\frac{\log(d)}{n}}\right\}\vert Z\right)\right] \leq \frac{16}{d}.\]
This follows from the fact that for any $r>0$,
\begin{multline*}
\left\{\theta\in\rset^d:\;\|\theta-\theta_\star\|_q>r\right\}\subseteq \\
\left\{\theta\in\rset^d:\;\|\theta-\theta_\star\|_q>r,\|\theta\|_0\leq \zeta\right\} \bigcup \left\{\theta\in\rset^d:\;\|\theta\|_0>\zeta\right\},\end{multline*}
and by Holder's inequality, for $\theta\in\rset^d$ such that $\|\theta\|_0\leq \zeta$, $\|\theta-\theta_\star\|_0\leq \bar s$, and
\[\|\theta-\theta_\star\|_q \leq \|\theta-\theta_\star\|_2(\bar s)^{\frac{1}{q}-\frac{1}{2}}.\]
Obviously, the same argument can be used with respect to the general bound in Theorem \ref{thm2}, but the resulting bound would be more complicated.
\end{remark}

\begin{remark}
It is interesting to observe that the contraction result given in Theorem \ref{thm1:logit}~Part(2) holds, not in spite of the large dimension $d$, but because $d$ is large. In other words, the result should be viewed as a form of concentration of measure phenomenon for $\check\Pi_{n,d}$ as $d\to\infty$. In particular, Theorem \ref{thm1:logit} should not  be applied to a fixed-dimension case in an attempt to recover standard Bayesian contraction results (fixed $d$, $n\to\infty$). Indeed, note that for $d$ fixed, the prior distribution $\Pi$ in (\ref{prior}-\ref{basic:prior}) with $\rho$ as in (\ref{rho:logit}) converges weakly to a point-mass at $0$ as $n\to\infty$, which is not a good behavior of a prior in fixed-dimensional settings. However with more appropriate prior assumptions, the argument in the proof of Theorem \ref{thm2} can be easily modified to derive convergence rate results that would be applicable to the fixed-dimensional setting. We refer to \cite{ghosh:rama:03} (and the references therein) for a good presentation of finite-dimensional Bayesian asymptotics.
\end{remark}

\section{Quasi-Bayesian inference of large binary graphical models}\label{sec:ising}
As another example, we consider the Bayesian analysis of high-dimensional binary graphical models (sometimes called Ising models). Let $\M_p$ be the space of real-valued $p\times p$ symmetric matrices.  For $\theta\in\M_p$, let $f_\theta$ be the probability mass function  defined on $\{0,1\}^p$ by
\begin{equation}\label{mod:ising}
 f_\theta(x_1,\ldots,x_p) = \frac{1}{Z_\theta}\exp\left(\sum_{j=1}^p\theta_{jj}x_j +\sum_{i<j}\theta_{ij}x_ix_j\right),\;\;x_j\in\{0,1\},\;1\leq j\leq p,\end{equation}
where $Z_\theta$ is the normalizing constant. We consider the problem of estimating $\theta$ under a sparsity assumption, from a matrix $Z\in\rset^{n\times p}$ where each row of $Z$ is an independent realization from $f_{\theta_\star}$ for some sparse $\theta_\star\in\M_p$. This problem has  generated some literature in recent years (\cite{barnejeeetal08,hoefling09, ravikumaretal10,atchade:ejs14,barber:drton:2015} and the references therein), all in the frequentist framework.

The Bayesian estimation of $\theta$ is significantly more challenging because  the normalizing constant $Z_\theta$ are typically intractable, and this leads to posterior distributions that are doubly intractable. 
In the frequentist literature cited above, the preferred approach for estimating $\theta$ is via penalized pseudo-likelihood maximization, which nicely side-steps the intractable normalizing constants issue. The quasi-Bayesian framework developed in this work can be used to combine these pseudo-likelihood functions with a prior distribution to produce quasi-Bayesian posterior distributions.

The most commonly used pseudo-likelihood function is obtained by taking the product of all the conditional densities in (\ref{mod:ising}). This is an idea that goes back at least to \cite{besag74}. 
Combined with a prior distribution $\Pi$ on $\M_p$, this approach readily yields a quasi-posterior distribution on $\M_p$ that falls in the framework presented above.  Note however that when $p$ is large, say $p\geq 500$, the space $\M_p$ has dimension bigger than $10^5$, and MCMC sampling from this quasi-posterior distribution becomes a daunting and time consuming task. One interesting idea is to break the symmetry and to consider the quasi-likelihood 
\begin{equation}\label{q_ising}
q_{n,\theta}(Z) = \prod_{j=1}^p \prod_{i=1}^n \frac{\exp\left(Z_{ij}\left(\theta_{jj}+\sum_{k\neq j} \theta_{kj}Z_{ik}\right)\right)}{1+\exp\left(\theta_{jj}+\sum_{k\neq j} \theta_{kj}Z_{ik}\right)},\;\;\theta\in\rset^{p\times p}.\end{equation}
Notice that the only difference between $\bar q_{n,\theta}$ and $q_{n,\theta}$ is that the symmetry constraint in $\theta$ is relaxed, that is the parameter space of the map $\theta\mapsto q_{n,\theta}(Z)$ is $\rset^{p\times p}$, not $\M_p$.  However this difference has a huge impact since now $q_{n,\theta}(Z)$ factorizes along the columns of $\theta$. As a result, maximizing a penalized version of  (\ref{q_ising}) is equivalent to solving $p$ independent logistic regression (assuming a separable penalty), and this can be done efficiently in a parallel computing environment. This pseudo-likelihood approach was popularized by the influential paper  \cite{meinshausen06} in the Gaussian case, and extended to the Ising model by \cite{ravikumaretal10}. In a recent work (\cite{atchade:15:c}), the author extended this idea to the Bayesian analysis of large Gaussian graphical models, and analyzed the contraction of the resulting quasi-posterior distribution using Theorem \ref{thm2}. Here we extend the method to the Ising model.

Throughout this section, if $\theta\in\rset^{p\times p}$, $\theta_{\cdot j}\in\rset^p$ denotes the $j$-th column of $\theta$. In view of the discussion above, and for a discrete probability distribution $\{\pi_\delta,\;\delta\in\Delta_p\}$ on $\Delta_p$, and $\rho>0$, we consider the quasi-posterior $\check\Pi_{n,d}$ on $\rset^{p\times p}$ given by
\begin{eqnarray}\label{quasipost:ising}
\check\Pi_{n,d}(\rmd\theta\vert Z) & \propto & q_{n,\theta}(Z)\prod_{j=1}^p \sum_{\delta\in\Delta_p}\pi_\delta \left(\frac{\rho}{2}\right)^{\|\delta\|_0}e^{-\rho\|\theta_{\cdot j}\|_1} \mu_{p,\delta}(\rmd\theta_{\cdot j})\\
& =& \prod_{j=1}^p \check\Pi_{n,d,j}(\rmd\theta_{\cdot j}\vert Z)\;\;\;.\nonumber
\end{eqnarray}
where $\check\Pi_{n,d,j}(\cdot\vert Z)$ is the probability measure on $\rset^p$ given by
\begin{multline*}
\check\Pi_{n,d,j}(\rmd u\vert Z)\propto \prod_{i=1}^n \frac{\exp\left(Z_{ij}\left(u_j+\sum_{k\neq j} u_kZ_{ik}\right)\right)}{1+\exp\left(u_j+\sum_{k\neq j} u_kZ_{ik}\right)}\\
\times \sum_{\delta\in\Delta_p}\pi_\delta \left(\frac{\rho}{2}\right)^{\|\delta\|_0}e^{-\rho\|u\|_1} \mu_{p,\delta}(\rmd u).\end{multline*}

\begin{remark}
One of the limitation of the approach is that the distribution $\check\Pi_{n,d}$ does not necessarily produce symmetric matrices. However, because of the contraction properties discussed below, typical realizations of $\check\Pi_{n,d}$ will be close to be symmetric. Furthermore, from a practical viewpoint, one can easily remedy a broken symmetry using various symmetrization rules as suggested for instance in \cite{meinshausen06}.
\end{remark}

We make the following assumptions.

\begin{assumptionC}\label{C1}
The rows of $Z\in\rset^{n\times p}$ are independent $\{0,1\}^p$-valued random variables with common probability mass function $f_{\theta_\star}$, for some $\theta_\star\in \M_p$.
\end{assumptionC}
 We define 
\[s_{\star j}\eqdef \|\theta_{\star \cdot j}\|_0,\;\;\mbox{ and }\;\; s_\star \eqdef \max_{1\leq j\leq p}s_{\star j}.\]
Hence $s_\star$ is the maximum degree of the undirected graph encoded by $\theta_\star$. The sparsity structure of $\theta_\star$ is the matrix $\delta_{\star}\in\{0,1\}^{p\times p}$ defined as $\delta_{\star, jk} =\textbf{1}_{\{|\theta_{\star jk}|>0\}}$. For $X\sim f_{\theta_\star}$, and $1\leq j\leq p$, we define 
\[X_{(j)}\eqdef (X_1,\ldots,X_{j-1},1,X_{j+1},\ldots,X_p)\in\rset^p,\] 
(viewed as  a column vector), and 
\[\mathcal{H}^{(j)} \eqdef \PE\left[g^{(2)}\left(\pscal{\theta_{\star \cdot j}}{X_{(j)}}\right)X_{(j)} X_{(j)}'\right].\]
We set
\begin{multline}\label{def:kappa:ising}
\underline{\kappa}_2(s) \eqdef \inf_{1\leq j\leq p}\inf\left\{ \frac{u'\H^{(j)} u}{\|u\|_2^2},\;u\in\rset^p\setminus\{0\},\;\;\|u\|_0\leq s\right\},\;\;\mbox{ and }\;\\
 \;\; \underline{\kappa}_2 \eqdef \inf_{1\leq j\leq p}\inf\left\{ \frac{u'\H^{(j)}u}{\|u\|_2^2},\;\;u\in\rset^p\setminus\{0\},\;\;\sum_{k:\;\delta_{\star kj}\neq 0}|u_k| \leq 7\sum_{k:\;\delta_{\star kj}= 0}|u_k|\right\}.
 \end{multline}

\begin{remark}
It is easy to verify that 
\[\nabla^{(2)}\log\left[\prod_{i=1}^n \frac{\exp\left(Z_{ij}\left(u_j+\sum_{k\neq j} u_kZ_{ik}\right)\right)}{1+\exp\left(u_j+\sum_{k\neq j} u_kZ_{ik}\right)}\right] = -\sum_{i=1}^ng^{(2)}\left(\pscal{u}{Z_{i(j)}}\right)Z_{i(j)} Z_{i(j)}',\]
where $Z_{i(j)}=(Z_{i1},\ldots,Z_{i,j-1},1,Z_{i,j+1},\ldots,Z_{ip})$. Hence $-n\mathcal{H}^{(j)}$ is the Fisher information matrix in the conditional model that regress the $j$-th column of $Z$ on the remaining. The quantities $\underline{\kappa}_2(s)$ and $\underline{\kappa}_2$ are (the minimum over $j$ of) restricted smallest eigenvalues of these information matrices.  We will work under the assumption that $\underline{\kappa}_2(s)>0$ and $\underline{\kappa}_2>0$, for some well-chosen $s$. Similar assumptions are made in most work on high-dimensional discrete graphical models (\cite{ravikumaretal10,atchade:ejs14,barber:drton:2015}). Although these assumptions are very natural in this context, to the best of our knowledge there does not seem to exist any easy way of checking them for a given parameter value $\theta_\star$.
\end{remark}

We will take the prior parameter $\rho$ as 
\begin{equation}\label{rho:ising}
\rho = 24\sqrt{n\log(p)}.
\end{equation}

In order to apply Theorem \ref{thm2}  we view $\rset^{p\times p}$ as $\rset^{d}$, with $d=p^2$, equipped with the Frobenius norm $\normfro{\theta}\eqdef \sqrt{\textsf{Tr}(\theta'\theta)}$, and inner product $\pscal{\theta}{\vartheta}_\textsf{F} \eqdef \textsf{Tr}(\theta'\vartheta)$, where $\textsf{Tr}(\theta)$ denotes the trace of the matrix $\theta$. Throughout this section, the norm $\|\cdot\|_2$ always denotes the Euclidean norm on $\rset^p$. We will work with split cones of the form $\{\theta\in\rset^{p\times p}:\; \|\theta_{\cdot j}\|_0\leq s_j,\;1\leq j\leq p\}$. 

\begin{theorem}\label{thm:Ising:1}
Consider the quasi-posterior distribution (\ref{quasipost:ising}). Suppose that C\ref{C1} holds, the prior $\{\pi_\delta,\;\delta\in\Delta_p\}$ satisfies H\ref{H2} (with $d$ replaced by $p$), and $\rho$ is given by (\ref{rho:ising}). For $1\leq j\leq p$ set 
\begin{equation}\label{zeta:ising}
\zeta_j \eqdef s_{\star j} +\frac{4}{c_4} +\frac{2}{c_4}\left(1+\frac{128}{\underline{\kappa}_2} + \frac{s_{\star j}}{64(\log(p))^2} +\frac{\log(4 e)}{\log(p)}\right)s_{\star j},\end{equation}
$\bar s_j \eqdef \lceil s_{\star j}+\zeta_j\rceil$, and $\bar s\eqdef \max_{1\leq j\leq p}\bar s_j$. If $\underline{\kappa}\eqdef \min(\underline{\kappa}_2,\underline{\kappa}_2(\bar s))>0$, then there exist universal finite positive constants $A_1, A_2$ such that for all $p$ large enough and 
\begin{equation}\label{bound:np:ising:1} 
n\geq A_1\left(\frac{1}{\underline{\kappa}}\sum_{j=1}^p\bar s_j\right)^2\log(p),\end{equation}
the following statements hold.
\begin{enumerate}
\item \[\PE^{(n)}\left[\check\Pi_{n,d}\left(\left\{\theta\in\rset^{p\times p}:\;\|\theta_{\cdot j}\|_0>\zeta_j,\;\mbox{ for some } j\right\}\vert Z\right)\right]\leq e^{-A_2 n} + \frac{4}{p}.\]
\item There exists a finite constant $M_0>2$ (that depends on the constants in H\ref{H2}), such that
\begin{multline*}
\PE^{(n)}\left[\check\Pi_{n,d}\left(\left\{\theta\in\rset^{d\times d}:\;\normfro{\theta-\theta_\star}> \frac{M_0}{\underline{\kappa}_2(\bar s)}\sqrt{\left(\sum_{j=1}^p\bar s_j\right)\frac{\log(p)}{n}}\right\}\vert Z\right)\right] \\
 \leq 2e^{-A_2 n} + \frac{12}{p}.\end{multline*}
\end{enumerate}
\end{theorem}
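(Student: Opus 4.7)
The plan is to exploit the product factorization $\check\Pi_{n,d}(\rmd\theta\vert Z)=\prod_{j=1}^p \check\Pi_{n,d,j}(\rmd\theta_{\cdot j}\vert Z)$ from (\ref{quasipost:ising}) to reduce the problem to $p$ parallel applications of Theorem \ref{thm1:logit}. Each marginal $\check\Pi_{n,d,j}$ is exactly the posterior studied in Section \ref{sec:logit} for a logistic regression in $\rset^p$ with design matrix $X^{(j)}\in\rset^{n\times p}$ whose $i$-th row is $Z_{i(j)}$. By the Ising conditional specification, conditionally on the other columns $\{Z_{\cdot k}\}_{k\neq j}$, the responses $\{Z_{ij}\}_{i=1}^n$ are independent Bernoulli with success probability $g'(\pscal{\theta_{\star\cdot j}}{Z_{i(j)}})$, so assumption B\ref{B1} is verified conditionally, with $\|X^{(j)}\|_\infty=1$ since entries of $Z$ are binary.

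The main technical step is to establish that the \emph{random} design matrices $X^{(j)}$ simultaneously satisfy the restricted eigenvalue conditions required by Theorem \ref{thm1:logit}. Specifically, I would show that there exists a universal $A_2>0$ such that on an event $\mathcal{G}$ of $\PP^{(n)}$-probability at least $1-e^{-A_2 n}$, the empirical restricted eigenvalues $\underline{\kappa}_1^{(j)}$ and $\underline{\kappa}_1^{(j)}(\bar s)$ associated with $\frac{1}{n}(X^{(j)})'W^{(j)}X^{(j)}$ are at least $\underline{\kappa}/2$ uniformly for all $j\in\{1,\ldots,p\}$. This is the main obstacle: the rows of $X^{(j)}$ are i.i.d.\ bounded vectors drawn from the marginal of $f_{\theta_\star}$, and a matrix-Bernstein-type concentration combined with a $\epsilon$-net argument over the set of $\bar s$-sparse unit vectors transfers the population bound $\underline{\kappa}_2(\bar s)$ to the empirical Fisher matrix. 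The need for uniform control over all $p$ columns simultaneously is what forces the sample-size condition (\ref{bound:np:ising:1}) to scale with $(\sum_j \bar s_j)^2\log(p)$ rather than $\max_j \bar s_j^2\log(p)$.

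On the good event $\mathcal{G}$, Theorem \ref{thm1:logit} applies to each $\check\Pi_{n,d,j}$ with its $d$ replaced by $p$. The sparsity threshold $\zeta_j$ in (\ref{zeta:ising}) is slightly larger than the analog $\zeta$ in (\ref{eq:zeta:logit}) (the additive constant is $4/c_4$ rather than $2/c_4$); this extra margin exploits the $d^{c_4}=p^{c_4}$ decay from H\ref{H2} to inflate the bound in Theorem \ref{thm1:logit}-(1) from $4/p$ to $4/p^2$ and, similarly, the contraction bound in part (2) from $12/p$ to $12/p^2$. A union bound over $j$ for the event $\{\|\theta_{\cdot j}\|_0>\zeta_j\}$ then gives Part (1) with overall upper bound $e^{-A_2 n}+4/p$.

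For Part (2), I would use the pigeonhole inclusion
\[\left\{\normfro{\theta-\theta_\star}>\tfrac{M_0}{\underline{\kappa}_2(\bar s)}\sqrt{\tfrac{(\sum_j \bar s_j)\log(p)}{n}}\right\}\subseteq \bigcup_{j=1}^p\left\{\|\theta_{\cdot j}-\theta_{\star\cdot j}\|_2>\tfrac{M_0}{\underline{\kappa}_2(\bar s)}\sqrt{\tfrac{\bar s_j\log(p)}{n}}\right\},\]
which is immediate from $\normfro{\cdot}^2=\sum_j\|\cdot_{\cdot j}\|_2^2$. Because the quasi-posterior factorizes, its mass on the union is at most the sum of the column-wise marginal masses. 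Applying Theorem \ref{thm1:logit}-(2) to each $\check\Pi_{n,d,j}$ on $\mathcal{G}$ (with the uniform lower bound $\underline{\kappa}_1^{(j)}(\bar s_j)\geq \underline{\kappa}_2(\bar s)/2$ absorbed into $M_0$), the per-$j$ bound is $12/p^2$, and a union bound yields $e^{-A_2 n}+12/p$, matching the claim.
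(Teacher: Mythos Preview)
Your treatment of Part (1) matches the paper's: condition on $Z^{(j)}$, invoke Theorem \ref{thm1:logit}-(1) column-by-column with the $\alpha=2$ strengthening (which is precisely what the extra $4/c_4$ in $\zeta_j$ buys), and use Lemma \ref{lem:tech_lem:Ising} to control the empirical restricted eigenvalues on an event of probability $\geq 1-e^{-A_2 n}$.

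For Part (2), however, you take a genuinely different route from the paper. The paper does \emph{not} reduce to Theorem \ref{thm1:logit}-(2) column-wise; instead it applies Theorem \ref{thm2}-(2) directly on $\rset^{p\times p}$ equipped with the Frobenius norm, using the split cone $\bar\Theta=\{\theta:\|\theta_{\cdot j}\|_0\leq \bar s_j\ \forall j\}$, $\bar\lambda=\rho\sqrt{\bar S}$ with $\bar S=\sum_j\bar s_j$, and the rate function $\r(x)=n\underline{\kappa}(\bar s)x^2/(2+\bar S^{1/2}x)$. Your approach---the pigeonhole inclusion $\{\normfro{\theta-\theta_\star}^2>C^2\sum_j\bar s_j\}\subseteq\bigcup_j\{\|\theta_{\cdot j}-\theta_{\star\cdot j}\|_2^2>C^2\bar s_j\}$ followed by a column-wise application of Theorem \ref{thm1:logit}-(2)---is correct and is in fact the method the paper uses to prove Theorem \ref{thm:Ising:3} (the $\tnorm{\cdot}$-norm result), transported here via your pigeonhole step. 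It is arguably more elementary, since it avoids re-doing the packing-number and test-function calculations in the matrix setting.

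One point to correct: your explanation of why the sample-size condition scales with $(\sum_j\bar s_j)^2$ is off. The uniform eigenvalue concentration over the $p$ columns (Lemma \ref{lem:tech_lem:Ising}) only requires $n\gtrsim (\bar s/\underline{\kappa})^2\log(p)$ with $\bar s=\max_j\bar s_j$, not $(\sum_j\bar s_j)^2$. In the paper's proof the $(\sum_j\bar s_j)^2$ scaling arises because the Frobenius-norm rate function carries $\bar S^{1/2}$ in its denominator and one needs $n\underline{\kappa}(\bar s)\gtrsim \bar S^{1/2}\bar\lambda=\rho\bar S$. Your column-wise argument actually goes through under the weaker hypothesis $n\gtrsim (\bar s/\underline{\kappa})^2\log(p)$, as in Theorem \ref{thm:Ising:3}; the stronger stated hypothesis (\ref{bound:np:ising:1}) is simply more than your route needs.
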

\begin{proof}
See Section \ref{sec:proof:thm:Ising}.
\end{proof}

\medskip
If $p$ and $n$ are large while $\underline{\kappa}$ remains bounded away from zero, Theorem \ref{thm:Ising:1}-Part(1) implies that the quasi-posterior distribution $\check\Pi_{n,d}$ puts high probability on matrices of $\rset^{d\times d}$ with the same sparsity pattern as  $\theta_\star$, and Theorem \ref{thm:Ising:1}-Part(2) implies that in this case, the rate of convergence in the Frobenius norm is of order
\[O\left(\sqrt{\frac{(p+S)\log(p)}{n}}\right),\]
where $S\eqdef \sum_{j=1}^p s_{\star j}$ is twice the number of non-zero components of $\theta_\star$.  As we show next, faster convergence rate is possible if one is only interested in components of $\theta$. We consider the norm
\[\tnorm{\theta} \eqdef \max_{1\leq j\leq p}\|\theta_{\cdot j}\|_2,\;\;\theta\in\rset^{p\times p}.\]

\begin{theorem}\label{thm:Ising:3}
Under the assumptions of Theorem \ref{thm:Ising:1}, if $\underline{\kappa}>0$, then there exist finite universal constants $A_1,A_2$, and a finite constant  $M_0>2$ (that depends only on the constants in H\ref{H2}) such that for all $p$ large enough, and for
\[ n\geq A_1 \left(\frac{\bar s}{\underline{\kappa}(\bar s)}\right)^2\log(p),\]
\[\PE^{(n)}\left[\check\Pi_{n,d}\left(\left\{\theta\in\rset^{d\times d}:\;\tnorm{\theta-\theta_\star}> \frac{M_0}{\underline{\kappa}_2(\bar s)}\sqrt{\frac{\bar s\log(p)}{n}}\right\}\vert Z\right)\right] \leq 2e^{-A_2 n} + \frac{12}{p}.\]
\end{theorem}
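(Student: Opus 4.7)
The plan is to exploit the product structure of $\check\Pi_{n,d}$ across the columns of $\theta$, reducing the analysis to $p$ parallel applications of Theorem \ref{thm2} to a logistic-regression-type quasi-posterior. Since $\check\Pi_{n,d}(\rmd\theta\vert Z) = \prod_{j=1}^p\check\Pi_{n,d,j}(\rmd\theta_{\cdot j}\vert Z)$ and $\{\tnorm{\theta-\theta_\star}>r\}$ is the union over $j$ of $\{\|\theta_{\cdot j}-\theta_{\star\cdot j}\|_2>r\}$, a union bound yields
\[
\check\Pi_{n,d}(\tnorm{\theta-\theta_\star}>r\vert Z) \leq \sum_{j=1}^p \check\Pi_{n,d,j}(\|u-\theta_{\star\cdot j}\|_2>r\vert Z).
\]
It therefore suffices to bound $\PE^{(n)}\check\Pi_{n,d,j}(\|u-\theta_{\star\cdot j}\|_2>r\vert Z)$ by $O(p^{-2})$ uniformly in $j$, for $r$ of the claimed order. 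This is the key reason why the sample-size requirement here scales with $\bar s$ rather than $\sum_j\bar s_j$: each column is handled on its own.

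For each $j$, the measure $\check\Pi_{n,d,j}$ coincides with the sparse Bayesian logistic regression posterior of Section \ref{sec:logit}, applied to the binary response $(Z_{1j},\ldots,Z_{nj})$ regressed on the random design $X^{(j)}$ whose rows are $Z_{i(j)}$. The key difference from Theorem \ref{thm1:logit} is that the design is now random and $\rho = 24\sqrt{n\log p}$ is inflated by a factor of roughly $6$ relative to the choice $4\|X^{(j)}\|_\infty\sqrt{n\log p}$ (note $\|X^{(j)}\|_\infty\leq 1$). This inflation is what upgrades the probability bounds from $O(1/p)$ per column to $O(1/p^2)$, so that the union bound over $j$ stays summable and produces the final $O(1/p)$ rate.

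Concretely, I would apply Theorem \ref{thm2}~Part~(2) to each column with $\bar\Theta = \{u\in\rset^p:\;\|u\|_0\leq \bar s\}$, the quadratic rate function $\r(x)= c\,\underline{\kappa}_2(\bar s)\,x^2/(1+b x)$ coming from the strong convexity of the logistic loss on bounded domains, $\bar\lambda=\rho$, and $\bar\epsilon\asymp\underline{\kappa}_2(\bar s)^{-1}\sqrt{\bar s\log(p)/n}$. The good event decomposes into: (i) control of the score $\|\nabla\log q^{(j)}_{n,\theta_{\star\cdot j}}(Z)\|_\infty\leq \rho/2$, which by Hoeffding's inequality fails with probability at most $2p\exp(-c\rho^2/n)=O(p^{-2})$ thanks to the enlarged $\rho$; and (ii) restricted strong convexity of the column pseudo log-likelihood over $\bar s$-sparse vectors, with curvature of order $\underline{\kappa}_2(\bar s)$. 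Once these are established, Theorem \ref{thm2}~Part~(2) delivers the desired single-column contraction, and to pass from the bound restricted to $u\in\theta_{\star\cdot j}+\bar\Theta$ to one on all of $\rset^p$, I invoke the column-wise sparsity assertion of Theorem \ref{thm:Ising:1}~Part~(1), which dominates the mass on $\{\|u\|_0>\bar s_j\}$ by terms of the same order.

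The main technical obstacle is step (ii): the restricted strong convexity must be transferred from the population Fisher information $\mathcal{H}^{(j)}$ (with restricted eigenvalue $\underline{\kappa}_2(\bar s)$) to the empirical second derivative uniformly over $\bar s$-sparse $u$. Unlike the fixed-design situation in Theorem \ref{thm1:logit}, this requires a uniform concentration argument: I would use a covering of the sparse unit sphere, apply a Hoeffding-type bound for each anchor, and perform a peeling/chaining union bound over the sparsity level $s\leq \bar s$ and over the $\binom{p}{s}$ choices of support. This is standard in the analysis of $\ell^1$-penalized logistic regression with random bounded design and, provided $n\gtrsim (\bar s/\underline{\kappa}_2(\bar s))^2\log p$, yields a deviation bound of the form $e^{-A_2 n}$, which is exactly the source of the first term in the theorem. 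Summing all contributions across the $p$ columns and relabelling constants completes the proof.
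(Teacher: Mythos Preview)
Your proposal is correct and follows essentially the same route as the paper: exploit the product structure of $\check\Pi_{n,d}$, apply a union bound over the $p$ columns, handle the non-sparse part via Theorem~\ref{thm:Ising:1}~Part~(1), and obtain an $O(p^{-2})$ per-column contraction bound so that summation over $j$ produces the final $O(p^{-1})$ rate. You also correctly identify the transfer of restricted strong convexity from $\mathcal{H}^{(j)}$ to its empirical counterpart as the key technical step, and the covering argument you sketch is precisely what Lemma~\ref{lem:tech_lem:Ising} supplies.

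The one difference worth flagging is in packaging. You propose to apply Theorem~\ref{thm2}~Part~(2) directly to each $\check\Pi_{n,d,j}$ under the random design, folding the restricted-eigenvalue concentration into the good event $\e_n$. The paper instead conditions on $Z^{(j)}$: since $\{Z\in\G^{(j)}\}$ is $Z^{(j)}$-measurable and, conditionally on $Z^{(j)}$, the $j$-th column follows a logistic regression with \emph{fixed} design, one can invoke Theorem~\ref{thm1:logit}~Part~(2) verbatim on the event $\G^{(j)}$ and then average over the design. This conditioning trick avoids reworking the calculations in the proof of Theorem~\ref{thm1:logit} and makes the randomness of the design essentially a non-issue once Lemma~\ref{lem:tech_lem:Ising} is in hand. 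Your approach would reach the same conclusion but with more bookkeeping; also note that with $\bar\Theta=\{u:\|u\|_0\leq\bar s\}$ one needs $\bar\lambda=\rho\sqrt{\bar s}$, not $\bar\lambda=\rho$, to match the stated $\bar\epsilon\asymp\underline{\kappa}_2(\bar s)^{-1}\sqrt{\bar s\log(p)/n}$.
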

\begin{proof}
See Section \ref{sec:proof:thm:Ising}.
\end{proof}

\section{Proofs}\label{sec:proofs}
\subsection{Proof of Theorem \ref{thm2}}\label{sec:proof:thm2}
To improve readability we split the proof in three parts. The first part deals with the normalizing constant of the quasi-posterior distribution, the second part deals with the existence of test functions, and the proof of the theorem itself is given in the third part.

\subsubsection{On the normalizing constant of the quasi-posterior distribution}
The next lemma provides a lower bound on the normalizing constant of the quasi-posterior distribution (\ref{quasi:post}), following an approach initially developed by \cite{castillo:etal:14}.
 
\begin{lemma}\label{lem1:thm2}
Assume H\ref{H0}-H\ref{H1}. Fix $L\geq 0$, and a split cone $\Theta\supseteq\Theta_\star$. For all $z\in\hat\e_{n,1}(\Theta,L)$,
\begin{equation}\label{lem1:thm2:bound1}
\int_{\rset^d}\frac{q_{n,\theta}(z)}{q_{n,\theta_\star}(z)}\Pi(\rmd \theta) \geq   \pi_{\delta_\star} \left(\frac{\rho^2}{L+\rho^2}\right)^{s_\star} e^{-\rho\|\theta_\star\|_1}.\end{equation}
\end{lemma}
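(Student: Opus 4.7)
The plan is to keep only the $\delta=\delta_\star$ term in the mixture prior (\ref{prior}), giving the preliminary bound
\[
\int\frac{q_{n,\theta}(z)}{q_{n,\theta_\star}(z)}\Pi(\rmd\theta)\;\geq\;\pi_{\delta_\star}\int\frac{q_{n,\theta}(z)}{q_{n,\theta_\star}(z)}\Pi(\rmd\theta\vert\delta_\star).
\]
With $S\eqdef\{j:\delta_{\star j}=1\}$, the conditional prior $\Pi(\cdot\vert\delta_\star)$ puts Dirac masses at $0$ off $S$ and $\textsf{Laplace}(\rho)$ densities on $S$. After the change of variables $\theta=\theta_\star+u$ with $u\in\Theta_\star$, the coordinatewise triangle bound $e^{-\rho|\theta_{\star j}+u_j|}\geq e^{-\rho|\theta_{\star j}|}e^{-\rho|u_j|}$ extracts the factor $e^{-\rho\|\theta_\star\|_1}$, and on the event $\hat\e_{n,1}(\Theta,L)$ (using $u\in\Theta_\star\subseteq\Theta$) the decomposition $\log[q_{n,\theta_\star+u}(z)/q_{n,\theta_\star}(z)]=\L_{n,\theta_\star+u}(z)+\pscal{\nabla\log q_{n,\theta_\star}(z)}{u}$ combined with $\L_{n,\theta_\star+u}(z)\geq-\frac{L}{2}\nnorm{u}^2$ reduces the problem to lower-bounding
\[
\prod_{j\in S}\int_\rset\frac{\rho}{2}\,e^{-Lu^2/2-\rho|u|}\,e^{G_j u}\rmd u,\qquad G_j\eqdef(\nabla\log q_{n,\theta_\star}(z))_j.
\]
Because the weight $e^{-Lu^2/2-\rho|u|}$ is even, writing $e^{G_j u}=\cosh(G_j u)+\sinh(G_j u)$ kills the $\sinh$ part, and the bound $\cosh\geq 1$ discards the uncontrolled coordinate $G_j$ altogether.

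\textbf{The univariate Gaussian--Laplace estimate.} The remaining task is the one-dimensional inequality
\[
\rho\int_0^\infty e^{-Lu^2/2-\rho u}\rmd u\;\geq\;\frac{\rho^2}{L+\rho^2}.
\]
For $L>0$, completing the square and letting $t\eqdef\rho/\sqrt{L}$ rewrites the left-hand side as $t\sqrt{2\pi}\,e^{t^2/2}\bar\Phi(t)$, where $\bar\Phi$ is the standard normal survival function, and the right-hand side as $t^2/(1+t^2)$, so the claim reduces to the classical Mills' ratio lower bound $\bar\Phi(t)\geq\frac{t}{1+t^2}\varphi(t)$, $t>0$ (with $\varphi$ the standard normal density). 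This is verified by checking that $h(t)\eqdef\bar\Phi(t)-\frac{t}{1+t^2}\varphi(t)$ has derivative $h'(t)=-2\varphi(t)/(1+t^2)^2<0$ together with $h(0)=1/2$ and $h(\infty)=0$. The case $L=0$ is an equality. Raising this coordinatewise bound to the power $s_\star$ and reinstating the $\pi_{\delta_\star}$ and $e^{-\rho\|\theta_\star\|_1}$ prefactors then yields (\ref{lem1:thm2:bound1}).

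\textbf{Main difficulty.} The only delicate point is disposing of the linear term $\pscal{\nabla\log q_{n,\theta_\star}(z)}{u}$, which is not controlled on the event $\hat\e_{n,1}$. A direct application of Jensen's inequality with $\Pi(\cdot\vert\delta_\star)$ as auxiliary measure would only yield the weaker factor $e^{-Ls_\star/\rho^2}$, which is strictly smaller than the claimed $[\rho^2/(L+\rho^2)]^{s_\star}$; it is the combination of the $\cosh$-symmetry observation and the sharp form of Mills' ratio that produces the stated rate.
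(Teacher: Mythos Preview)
Your proof is correct and follows the same overall skeleton as the paper's: restrict to the $\delta=\delta_\star$ term, shift by $\theta_\star$, extract $e^{-\rho\|\theta_\star\|_1}$ via the triangle inequality, lower-bound the quadratic part using the definition of $\hat\e_{n,1}$, and finish with the Mills' ratio bound $\bar\Phi(t)\geq \frac{t}{1+t^2}\varphi(t)$ on the remaining Gaussian--Laplace integral.

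The one point of apparent divergence is how you dispose of the linear term $\pscal{\nabla\log q_{n,\theta_\star}(z)}{u}$. You use the parity decomposition $e^{G_j u}=\cosh(G_j u)+\sinh(G_j u)$, drop the odd part against the even weight, and bound $\cosh\geq 1$. The paper obtains the \emph{identical} inequality by Jensen, but with the normalized Gaussian--Laplace weight
\[
p(u)\propto e^{-\frac{L}{2}\|u\|_2^2-\rho\|u\|_1}
\]
as reference measure (not $\Pi(\cdot\vert\delta_\star)$): since $p$ is symmetric, $\int u\,p(u)\,\rmd u=0$, hence $\int e^{\pscal{\vartheta}{u}}p(u)\,\rmd u\geq e^{\pscal{\vartheta}{0}}=1$. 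Your $\cosh$ argument and this Jensen argument are two phrasings of the same symmetry fact and produce the same lower bound $\int e^{\pscal{\vartheta}{u}-\frac{L}{2}\|u\|^2-\rho\|u\|_1}\rmd u\geq \int e^{-\frac{L}{2}\|u\|^2-\rho\|u\|_1}\rmd u$. So the ``naive Jensen'' you warn against in your closing paragraph (reference measure $\Pi(\cdot\vert\delta_\star)$, yielding only $e^{-Ls_\star/\rho^2}$) is indeed weaker, but it is not what the paper does; the paper's Jensen is exactly your symmetry trick in disguise.
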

\begin{proof}
Using the definition of the prior $\Pi$, we have 
\begin{equation}\label{eq1:proof:lem1:thm2}
\int_{\rset^d}\frac{q_{n,\theta}(z)}{q_{n,\theta_\star}(z)}\Pi(\rmd \theta)
\geq  \pi_{\delta_\star}\left(\frac{\rho}{2}\right)^{s_\star}\int_{\theta_\star + \Theta_\star}\frac{q_{n,\theta}(z)}{q_{n,\theta_\star}(z)} e^{-\rho\|\theta\|_1}\mu_{d,\delta_\star}(\rmd \theta).\end{equation}
For $z\in \hat\e_{n,1}(\Theta,L)$, and $\theta\in\theta_\star + \Theta_\star\subseteq \theta_\star + \Theta$, 
\[\log q_{n,\theta}(z) -\log q_{n,\theta_\star}(z) \geq \pscal{\nabla\log q_{n,\theta_\star}(z)}{\theta-\theta_\star} -\frac{L}{2}\|\theta-\theta_\star\|_2^2.\]
Setting $\vartheta = \nabla\log q_{n,\theta_\star}(z)$, (\ref{eq1:proof:lem1:thm2}) then gives
\begin{eqnarray}\label{eq2:proof:lem1:thm2}
\int_{\rset^d}\frac{q_{n,\theta}(z)}{q_{n,\theta_\star}(z)}\Pi(\rmd \theta)  &\geq &  \pi_{\delta_\star}\left(\frac{\rho}{2}\right)^{s_\star} e^{-\rho\|\theta_\star\|_1}\\
&& \times \int_{\theta_\star + \Theta_\star}e^{\pscal{\vartheta}{\theta-\theta_\star}-\frac{L}{2}\|\theta-\theta_\star\|_2^2 } e^{-\rho\|\theta-\theta_\star\|_1}\mu_{d,\delta_\star}(\rmd \theta)\nonumber.
\end{eqnarray}
We note that the support of the measure $\mu_{d,\theta_\star}$ is $\Theta_\star=\theta_\star+\Theta_\star$. Using this and the change of variable $\theta=\theta_\star+z$, we see that  the integral on the right-hand size of (\ref{eq2:proof:lem1:thm2}) is 
\[\int_{\rset^d} e^{\pscal{\vartheta}{z} -\frac{L}{2}\|z\|_2^2 -\rho\|z\|_1}\mu_{d,\delta_\star}(\rmd z).\]
By Jensen's inequality, 
\begin{multline*}
\int_{\rset^d} e^{\pscal{\vartheta}{z}}\frac{e^{-\frac{L}{2}\|z\|_2^2 -\rho\|z\|_1}}{\int_{\rset^d}e^{-\frac{L}{2}\|u\|_2^2 -\rho\|u\|_1}\mu_{d,\delta_\star}(\rmd u)}\mu_{d,\delta_\star}(\rmd z)\\
\geq 
\exp\left(\int_\rset \pscal{\vartheta}{z} \frac{e^{-\frac{L}{2}\|z\|_2^2 -\rho\|z\|_1}}{\int_{\rset^d}e^{-\frac{L}{2}\|u\|_2^2 -\rho\|u\|_1}\mu_{d,\delta_\star}(\rmd u)}\mu_{d,\delta_\star}(\rmd z)\right)=1.\end{multline*}
Using this, and going back to (\ref{eq1:proof:lem1:thm2}) we conclude that
\[\int_{\rset^d}\frac{q_{n,\theta}(z)}{q_{n,\theta_\star}(z)}\Pi(\rmd \theta) \geq   \pi_{\delta_\star}\left(\frac{\rho}{2}\right)^{s_\star} e^{-\rho\|\theta_\star\|_1} \int_{\rset^d}e^{-\frac{L}{2}\|u\|_2^2 -\rho\|u\|_1}\mu_{d,\delta_\star}(\rmd u).\]
Now, note that 
\[\int_{\rset^d}e^{-\frac{L}{2}\|u\|_2^2 -\rho\|u\|_1}\mu_{d,\delta_\star}(\rmd u)=\left(\int_\rset e^{-\rho|z| -\frac{L}{2}z^2}\rmd z\right)^{s_\star}.\]
It is easy to calculate that  for $a\geq 0, b>0$
\begin{equation}\label{en:nc}
\int_{\rset}e^{-\frac{a}{2}u^2 -b|u|}\rmd u = \frac{2}{\sqrt{a}}\frac{1-\Phi\left(\frac{b}{\sqrt{a}}\right)}{\phi\left(\frac{b}{\sqrt{a}}\right)},\end{equation}
where $\phi$ is the density of the standard normal distribution, and $\Phi$ its cdf. The formula continues to hold by continuity at $a=0$. The ratio $(1-\Phi(z))/\phi(z)$ (known as Mills' ratio), satisfies 
\begin{equation}\label{mills:ratio:ineq}
\frac{z}{1+z^2}\leq \frac{2}{z+\sqrt{z^2+4}}\leq \frac{1-\Phi(z)}{\phi(z)}\leq \frac{4}{3z+\sqrt{z^2+8}},\;\; z\geq 0,\end{equation}
see for instance \cite{baricz:2008}~Theorem 2.3 for a proof. We use this inequality and (\ref{en:nc}) to conclude that 
\[\int_\rset e^{-\rho|z|-\frac{L}{2} z^2}\rmd z\geq \frac{2\rho}{L+\rho^2},
\]
and the lemma follows easily.
\end{proof}

\subsubsection{On the existence of test functions}
In this paragraph we establish the existence of test functions to test the density $f_{n,\theta_\star}$ against some mis-specified alternatives $Q_{n,\theta}$ defined below. The result is based on Lemma 6.1 of \cite{kleijn:vaart:06}, that we shall recall first for completeness. For any two integrable non-negative functions $q_1,q_2$ on $\Xset^{(n)}$, and for $\alpha\in (0,1)$, the Hellinger transform $\H_\alpha(q_1,q_2)$ is defined as 
\[\H_\alpha(q_1,q_2)\eqdef\int_{\Xset^{(n)}} q_1^{\alpha}(z)q^{1-\alpha}_2(z)\rmd z.\]
Here we work with the case $\alpha=1/2$, and set $\H(q_1,q_2) \eqdef \H_{1/2}(q_1,q_2)$. 
\begin{lemma}[\cite{kleijn:vaart:06}~Lemma 6.1]\label{exist:test}
Let $p$ be a probability density function on $\Xset^{(n)}$ and $\mathcal{Q}$ a class of non-negative integrable functions on $\Xset^{(n)}$. Then
\begin{equation}\label{lem:klein}
\inf_{\phi}\sup_{q\in\mathcal{Q}} \left[\int_{\Xset^{(n)}} \phi(z)p(z)\rmd z + \int_{\Xset^{(n)}} (1-\phi(z))q(z)\rmd z\right] \leq \sup_{q\in\textsf{conv}(\mathcal{Q})} \H(p,q),\end{equation}
where $\textsf{conv}(\mathcal{Q})$ is the convex hull of $\mathcal{Q}$, and the infimum in (\ref{lem:klein}) is taken over all test functions, that is all measurable functions $\phi:\;\Xset^{(n)}\to [0,1]$. Furthermore, there exists a test function $\phi$ that attains the infimum.
\end{lemma}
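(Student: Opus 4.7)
The strategy is to view the left-hand side of (\ref{lem:klein}) as the value of a two-player zero-sum game, apply a minimax theorem to swap the order of $\inf$ and $\sup$, and then evaluate the inner minimization explicitly via a Neyman--Pearson style argument. The only analytic input is the pointwise inequality $\min(a,b)\leq \sqrt{ab}$ for $a,b\geq 0$.

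First I would set up the function spaces. Let $\mathcal{T}$ denote the set of measurable $\phi:\Xset^{(n)}\to[0,1]$, regarded as a subset of the unit ball of $L^\infty(\Xset^{(n)},\rmd z)$ endowed with the weak-$\star$ topology (as the dual of $L^1$). By Banach--Alaoglu, $\mathcal{T}$ is convex and weak-$\star$ compact. Define
\[
f(\phi,q)\eqdef \int\phi\, p\,\rmd z + \int(1-\phi)\,q\,\rmd z,\qquad \phi\in\mathcal{T},\ q\in \textsf{conv}(\mathcal{Q}),
\]
which is affine and weak-$\star$ continuous in $\phi$ for each $q$, and linear in $q$. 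Since $q\mapsto f(\phi,q)$ is linear, $\sup_{q\in\mathcal{Q}}f(\phi,q)=\sup_{q\in\textsf{conv}(\mathcal{Q})}f(\phi,q)$ by writing any mixture as a convex combination and using $\sum_i\lambda_i f(\phi,q_i)\leq \max_i f(\phi,q_i)$. Sion's minimax theorem applied on $\mathcal{T}\times\textsf{conv}(\mathcal{Q})$ (with compactness only required on the $\phi$-side) then gives
\[
\inf_{\phi\in\mathcal{T}}\sup_{q\in\mathcal{Q}}f(\phi,q)
=\sup_{\bar q\in\textsf{conv}(\mathcal{Q})}\inf_{\phi\in\mathcal{T}}f(\phi,\bar q).
\]

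Next, for fixed $\bar q\in\textsf{conv}(\mathcal{Q})$, I would solve the inner minimization via a Neyman--Pearson identity. Writing $f(\phi,\bar q)=\int\bar q\,\rmd z + \int\phi(p-\bar q)\,\rmd z$, the minimizer is $\phi^\star=\mathbf{1}_{\{p\leq \bar q\}}$, so $\inf_{\phi}f(\phi,\bar q)=\int\min(p,\bar q)\,\rmd z$. Applying the elementary inequality $\min(a,b)\leq\sqrt{ab}$ pointwise gives
\[
\int\min(p,\bar q)\,\rmd z\leq \int\sqrt{p\,\bar q}\,\rmd z=\H(p,\bar q).
\]
Taking the supremum over $\bar q\in\textsf{conv}(\mathcal{Q})$ and combining with the minimax equality yields (\ref{lem:klein}). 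For the attainment statement, observe that $\phi\mapsto\sup_{q\in\mathcal{Q}}f(\phi,q)$ is a pointwise supremum of weak-$\star$ continuous affine functions of $\phi$, hence weak-$\star$ lower semi-continuous; combined with weak-$\star$ compactness of $\mathcal{T}$, the infimum is attained.

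The main obstacle is the rigorous verification of the minimax swap, which requires exhibiting a topology in which $\mathcal{T}$ is compact while $\phi\mapsto f(\phi,\bar q)$ remains continuous for every $\bar q\in\textsf{conv}(\mathcal{Q})$. The weak-$\star$ topology of $L^\infty$ against $L^1$ accomplishes this precisely because $p$ and every $\bar q\in\textsf{conv}(\mathcal{Q})$ are integrable. Once this bookkeeping is addressed, the remainder of the proof reduces to the short Neyman--Pearson identity plus the pointwise bound $\min(a,b)\leq\sqrt{ab}$, both of which are essentially one-line computations.
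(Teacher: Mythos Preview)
The paper does not actually prove this lemma: it is quoted verbatim from \cite{kleijn:vaart:06}~Lemma 6.1 and used as a black box in the subsequent construction (Lemma~\ref{lem4:thm2}). So there is no ``paper's own proof'' to compare against here.

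That said, your sketch is the standard argument and is correct. The key steps --- embedding the tests in the weak-$\star$ compact set $\{\phi\in L^\infty:\;0\le\phi\le 1\}$, applying Sion's minimax theorem to the bilinear functional $f(\phi,q)$, reducing the inner infimum to $\int\min(p,\bar q)\,\rmd z$ via the Neyman--Pearson choice $\phi^\star=\mathbf{1}_{\{p\le\bar q\}}$, and then bounding $\min(a,b)\le\sqrt{ab}$ --- are exactly the ingredients of the original proof in \cite{kleijn:vaart:06}. Your observation that linearity of $q\mapsto f(\phi,q)$ lets one freely pass between $\mathcal{Q}$ and $\textsf{conv}(\mathcal{Q})$ in the outer supremum is also correct, and your attainment argument (lower semicontinuity of a supremum of weak-$\star$ continuous affine maps, plus compactness) is the right one. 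The only point worth being explicit about is that the sigma-finiteness of $\rmd z$ (assumed in the paper) is what guarantees $L^\infty=(L^1)^\star$, so that Banach--Alaoglu applies; you allude to this but it is worth stating.
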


To derive the test function for our quasi-likelihood setting, we will also need the following easy result.

\begin{lemma}\label{lem3:thm2}
Fix  $\lambda\geq 0$, a split cone $\Theta$, and a rate function $\r$ such that $\phi_\r(2\lambda)$ is finite. For any $\theta\in\theta_\star +\Theta$ such that  $\nnorm{\theta-\theta_\star}\geq \phi_\r(2\lambda)$, we have
\[\frac{q_{n,\theta}(z)}{q_{n,\theta_\star}(z)}\leq e^{- \frac{1}{4}\r(\nnorm{\theta-\theta_\star})},\;z\in\e_{n,0}(\Theta,\lambda)\cap\check\e_{n,1}(\Theta,\r).\]
\end{lemma}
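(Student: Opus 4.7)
The plan is to simply combine the defining inequalities of the two events $\e_{n,0}(\Theta,\lambda)$ and $\check\e_{n,1}(\Theta,\r)$, and then absorb the linear term using the definition of $\phi_\r$. First, I would rewrite the log-ratio in the form
\[
\log\frac{q_{n,\theta}(z)}{q_{n,\theta_\star}(z)} \;=\; \pscal{\nabla\log q_{n,\theta_\star}(z)}{\theta-\theta_\star} + \L_{n,\theta}(z),
\]
which is just the definition of $\L_{n,\theta}(z)$ rearranged.

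Next, for $z\in \check\e_{n,1}(\Theta,\r)$ and $\theta\in\theta_\star+\Theta$, the curvature bound gives $\L_{n,\theta}(z)\leq -\tfrac12\r(\nnorm{\theta-\theta_\star})$. For the linear term, note that $\theta-\theta_\star\in\Theta$ since $\Theta$ is a cone; writing $\theta-\theta_\star=\nnorm{\theta-\theta_\star}\,u$ with $u\in\Theta$ a unit vector, the definition of $\e_{n,0}(\Theta,\lambda)$ yields
\[
|\pscal{\nabla\log q_{n,\theta_\star}(z)}{\theta-\theta_\star}|\;\leq\;\tfrac{\lambda}{2}\nnorm{\theta-\theta_\star}.
\]
Combining the two estimates,
\[
\log\frac{q_{n,\theta}(z)}{q_{n,\theta_\star}(z)} \;\leq\; \tfrac{\lambda}{2}\nnorm{\theta-\theta_\star} - \tfrac12\r(\nnorm{\theta-\theta_\star}).
\]

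Finally, I would invoke the definition of $\phi_\r$: since $\nnorm{\theta-\theta_\star}\geq \phi_\r(2\lambda)$, we have $\r(\nnorm{\theta-\theta_\star})\geq 2\lambda\nnorm{\theta-\theta_\star}$, so $\tfrac{\lambda}{2}\nnorm{\theta-\theta_\star}\leq \tfrac14 \r(\nnorm{\theta-\theta_\star})$. Substituting this into the previous display collapses the bound to $-\tfrac14 \r(\nnorm{\theta-\theta_\star})$, and exponentiating finishes the proof. There is no real obstacle here; the lemma is essentially an algebraic consequence of the three definitions (of $\L_{n,\theta}$, of the two ``good'' events, and of $\phi_\r$), and the only mild subtlety is using the cone property of $\Theta$ to apply the $\e_{n,0}$ bound to the non-unit vector $\theta-\theta_\star$.
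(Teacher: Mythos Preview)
Your proof is correct and follows exactly the same approach as the paper: rewrite the log-ratio as $\pscal{\nabla\log q_{n,\theta_\star}(z)}{\theta-\theta_\star}+\L_{n,\theta}(z)$, bound the two terms using the definitions of $\e_{n,0}(\Theta,\lambda)$ and $\check\e_{n,1}(\Theta,\r)$, and then use $\nnorm{\theta-\theta_\star}\geq\phi_\r(2\lambda)$ to absorb the linear term into half of the rate-function term. Your explicit mention of the cone property to normalize $\theta-\theta_\star$ is the only cosmetic addition.
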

\begin{proof}
For all $z\in\Xset^{(n)}$, and $\theta\in\rset^d$, we have
\[
\frac{q_{n,\theta}(z)}{q_{n,\theta_\star}(z)} = \exp\left[\pscal{\nabla\log q_{n,\theta_\star}(z)}{\theta-\theta_{\star}} +\L_{n,\theta}(z)\right].\]
By the definition of $\check\e_{n,1}(\Theta,\r)$, for $\theta\in\theta_\star +\Theta$ and $z\in \check\e_{n,1}(\Theta,\r)$, we have $\L_{n,\theta}(z) \leq -\frac{1}{2} \r(\nnorm{\theta-\theta_\star})$. And by the definition of $\e_{n,0}(\Theta,\lambda)$, for $z\in \e_{n,0}(\Theta,\lambda)$, and $\theta\in\theta_\star +\Theta$, we have 
\[\left|\pscal{\nabla\log q_{n,\theta_\star}(z)}{\theta-\theta_{\star}}\right| \leq  \frac{\lambda}{2} \nnorm{\theta-\theta_\star}.\]
Hence,  for $z\in\e_{n,0}(\Theta,\lambda)\cap\check\e_{n,1}(\Theta,\r)$, and $\theta\in\theta_\star +\Theta$,
\begin{equation}\label{eq1:lem3:thm2}
\frac{q_{n,\theta}(z)}{q_{n,\theta_\star}(z)}\leq  \exp\left[\frac{\lambda}{2}\nnorm{\theta-\theta_\star} - \frac{1}{2}\r(\nnorm{\theta-\theta_\star})\right].\end{equation}
If in addition $\nnorm{\theta-\theta_\star}\geq \phi_\r(2\lambda)$, then from the properties of the rate function $\r$, we have $2\lambda \nnorm{\theta-\theta_\star} - \r(\nnorm{\theta-\theta_\star})\leq 0$, and the result follows.
\end{proof}

Our main result on the existence of test functions follows. We recall that for $M>0$, and a split cone $\Theta$, $\textsf{B}_d(\Theta,M)\eqdef\{\theta\in\theta_\star+\Theta:\;\mbox{ s.t. }\;\nnorm{\theta-\theta_\star}\leq M\}$, and for $\epsilon>0$, $\textsf{D}(\epsilon,\textsf{B}_d(\Theta,M))$ denotes the $\epsilon$-packing number of $\textsf{B}_d(\Theta,M)$ in the norm $\nnorm{\cdot}$.

\begin{lemma}\label{lem4:thm2}
Fix $\lambda\geq 0$, a split cone $\Theta$, and a rate function $\r$ such that $\tilde\epsilon\eqdef \phi_\r(2\lambda)$ is finite. Set $\bar\e_n\eqdef \e_{n,0}(\Theta,\lambda)\cap\check\e_{n,1}(\Theta,\r)$. For $\theta\in\rset^d$, define the function
\begin{equation}\label{def:Qn}
Q_{n,\theta}(z)\eqdef\textbf{1}_{\bar\e_n}(z)\frac{q_{n,\theta}(z)}{q_{n,\theta_\star}(z)}f_{n,\theta_\star}(z),\;\;\;z\in\Xset^{(n)}.\end{equation}
For any $M>2$, there exists a measurable function $\phi:\;\Xset^{(n)} \to [0,1]$ such that,
\[\PE^{(n)}(\phi(Z))\leq \sum_{j\geq 1}\textsf{D}_je^{-\frac{1}{8}\r(\frac{jM\tilde\epsilon}{2})},\]
where $\textsf{D}_j \eqdef \textsf{D}\left(\frac{j M\tilde \epsilon}{2},\textsf{B}_d(\Theta,(j+1)M\tilde\epsilon)\right)$. Furthermore, for all $j\geq 1$, all $\theta\in\theta_\star +\Theta$ such that  $\nnorm{\theta-\theta_\star}> jM\tilde \epsilon$, 
\[\int_{\Xset^{(n)}}(1-\phi(z))Q_{n,\theta}(z)\rmd z \leq e^{-\frac{1}{8}\r(\frac{jM\tilde\epsilon}{2})}.\]
\end{lemma}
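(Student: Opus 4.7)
The strategy is the classical peeling-plus-packing construction for existence of tests, suitably adapted to the quasi-likelihood setup through Lemma~\ref{exist:test} of Kleijn--van der Vaart, with Lemma~\ref{lem3:thm2} supplying the crucial Hellinger-type domination at each point of the alternative. First I would slice the alternative into shells $S_j\eqdef\{\theta\in\theta_\star+\Theta:\;jM\tilde\epsilon<\nnorm{\theta-\theta_\star}\leq (j+1)M\tilde\epsilon\}$, $j\geq 1$, and take a maximal $(jM\tilde\epsilon/2)$-packing $\theta_{j,1},\ldots,\theta_{j,N_j}$ of $S_j$. Since $S_j\subseteq \textsf{B}_d(\Theta,(j+1)M\tilde\epsilon)$, monotonicity of the packing number gives $N_j\leq \textsf{D}_j$, and by maximality the balls of radius $jM\tilde\epsilon/2$ around these centers cover $S_j$.

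For each pair $(j,k)$ I would then apply Lemma~\ref{exist:test} to the class $\mathcal{Q}_{j,k}\eqdef\{Q_{n,\theta'}:\;\theta'\in\theta_\star+\Theta,\;\nnorm{\theta'-\theta_{j,k}}\leq jM\tilde\epsilon/2\}$. Since each $\theta_{j,k}$ lies in $S_j$, the reverse triangle inequality delivers $\nnorm{\theta'-\theta_\star}\geq jM\tilde\epsilon/2$ for every index $\theta'$ of $\mathcal{Q}_{j,k}$; as $M>2$ and $j\geq 1$, this lower bound exceeds $\tilde\epsilon=\phi_\r(2\lambda)$, so Lemma~\ref{lem3:thm2} yields the pointwise domination $Q_{n,\theta'}(z)\leq e^{-\r(jM\tilde\epsilon/2)/4}f_{n,\theta_\star}(z)$ on $\bar\e_n$ (it holds trivially off $\bar\e_n$, where $Q_{n,\theta'}$ vanishes). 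This bound is preserved by convex combinations, so for every $q\in\textsf{conv}(\mathcal{Q}_{j,k})$,
\[\H(f_{n,\theta_\star},q)=\int\sqrt{f_{n,\theta_\star}(z)\,q(z)}\,\rmd z\leq e^{-\r(jM\tilde\epsilon/2)/8}.\]
Lemma~\ref{exist:test} then supplies a test $\phi_{j,k}$ with $\int \phi_{j,k}f_{n,\theta_\star}\,\rmd z\leq e^{-\r(jM\tilde\epsilon/2)/8}$ and $\int(1-\phi_{j,k})Q_{n,\theta'}\,\rmd z\leq e^{-\r(jM\tilde\epsilon/2)/8}$ uniformly over $\mathcal{Q}_{j,k}$.

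Finally I would glue the local tests by setting $\phi\eqdef\sup_{j\geq 1,\,k}\phi_{j,k}$, which is measurable as a countable supremum of $[0,1]$-valued measurable functions. Summing the Type~I bounds gives $\PE^{(n)}[\phi(Z)]\leq \sum_{j\geq 1}\textsf{D}_j e^{-\r(jM\tilde\epsilon/2)/8}$. For the Type~II claim, any $\theta\in\theta_\star+\Theta$ with $\nnorm{\theta-\theta_\star}>jM\tilde\epsilon$ belongs to some shell $S_i$ with $i\geq j$ and sits in the ball around some $\theta_{i,k}$, hence indexes $\mathcal{Q}_{i,k}$; the corresponding bound for $\phi_{i,k}$, together with monotonicity of $\r$, yields $\int(1-\phi)Q_{n,\theta}\,\rmd z\leq \int(1-\phi_{i,k})Q_{n,\theta}\,\rmd z\leq e^{-\r(iM\tilde\epsilon/2)/8}\leq e^{-\r(jM\tilde\epsilon/2)/8}$.

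The main technical subtlety is the bookkeeping in the middle step: the packing centers must be chosen in the shell $S_j$ (rather than in the full ball) so that the triangle inequality produces the uniform separation $\nnorm{\theta'-\theta_\star}\geq jM\tilde\epsilon/2$ needed to invoke Lemma~\ref{lem3:thm2} with the correct exponent, while the cardinality is still controlled by $\textsf{D}_j$ through the inclusion $S_j\subseteq \textsf{B}_d(\Theta,(j+1)M\tilde\epsilon)$. The hypothesis $M>2$ is precisely what guarantees that this separation exceeds the threshold $\phi_\r(2\lambda)$ required for Lemma~\ref{lem3:thm2} to fire.
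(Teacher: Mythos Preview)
Your argument is correct and follows the same strategy as the paper's own proof: shell decomposition of $\{\theta\in\theta_\star+\Theta:\nnorm{\theta-\theta_\star}>M\tilde\epsilon\}$, maximal $(jM\tilde\epsilon/2)$-packings of each shell, local tests from Lemma~\ref{exist:test} with the Hellinger bound supplied by Lemma~\ref{lem3:thm2}, and gluing via the supremum. The bookkeeping you single out (packing centers in the shell to get $\nnorm{\theta'-\theta_\star}>jM\tilde\epsilon/2$, cardinality controlled through the inclusion into $\textsf{B}_d(\Theta,(j+1)M\tilde\epsilon)$, and the role of $M>2$) is exactly the content of the paper's proof.
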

\begin{proof}
First, notice that the function $z\mapsto Q_{n,\theta}(z)$ is integrable for all $\theta\in\theta_\star+\Theta$. Indeed, using (\ref{eq1:lem3:thm2}) for any such $\theta$,  and for $z\in\bar\e_n$: $\frac{q_{n,\theta}(z)}{q_{n,\theta_\star}(z)}\leq \exp\left(\frac{\lambda}{2}\nnorm{\theta-\theta_\star}\right)$. Hence,
\[\int_{\Xset^{(n)}}Q_{n,\theta}(z)\rmd z=\int_{\bar\e_n}\frac{q_{n,\theta}(z)}{q_{n,\theta_\star}(z)}f_{n,\theta_\star}(z)\rmd z\leq e^{\frac{\lambda}{2}\nnorm{\theta-\theta_\star}}.\]

Now, fix $\epsilon>2\tilde\epsilon$ (where $\tilde\epsilon=\phi_\r(2\lambda)$), and fix $\theta\in \theta_\star+\Theta$ such that $\nnorm{\theta-\theta_\star}>\epsilon$. Set $\mathcal{P}_\theta\eqdef \{Q_{n,u}:\; u\in\theta_\star+\Theta \mbox{ and }\; \nnorm{u-\theta}\leq \epsilon/2\}$, and let $\textsf{conv}(\mathcal{P}_\theta)$ denote the convex hull of the set $\mathcal{P}_\theta$. By Lemma \ref{exist:test} applied with $p=f_{n,\theta_\star}$, and $\mathcal{Q}=\mathcal{P}_\theta$, there exists a measurable function  $\phi_{\theta}:\;\Xset^{(n)}\to [0,1]$ such that
\begin{multline}\label{eq1:lem4:thm2}
\PE^{(n)}\left[\phi_{\theta}(Z)\right]\leq \sup_{Q\in\textsf{conv}(\mathcal{P}_\theta)}\H(f_{n,\theta_\star},Q)\\
\;\;\;\mbox{ and }\;\;\; \sup_{Q\in\mathcal{P}_\theta}\int_{\Xset^{(n)}} (1-\phi_{\theta}(z))Q(z)\rmd z\leq \sup_{Q\in\textsf{conv}(\mathcal{P}_\theta)}\H(f_{n,\theta_\star},Q).
\end{multline}
Any $Q\in\textsf{conv}(\mathcal{P}_\theta)$ can be written as a finite convex combination $Q=\sum_j \alpha_j Q_{n,u_j}$ where $\alpha_j\geq 0$, $\sum_j\alpha_j=1$, $u\in \theta_\star+\Theta$, and $\nnorm{u_j-\theta}\leq\epsilon/2$. However, since $\nnorm{\theta-\theta_\star}>\epsilon$, and $\nnorm{u_j-\theta}\leq \epsilon/2$, we see that $\nnorm{u_j-\theta_\star}>\epsilon/2>\tilde\epsilon$. Hence, using Lemma \ref{lem3:thm2} and the definition of the Hellinger transform, we have
\[\H(f_{n,\theta_\star},Q)=\int_{\Xset^{(n)}}\sqrt{\sum_j\alpha_j\textsf{1}_{\bar\e_n}(z)\frac{q_{n,u_j}(z)}{q_{n,\theta_\star}(z)}}f_{n,\theta_\star}(z)\rmd z\leq \sqrt{\sum_j\alpha_j e^{-\frac{1}{4} \r(\nnorm{u_j-\theta_\star})}}.\]
 Hence (\ref{eq1:lem4:thm2}) becomes
\begin{multline}\label{eq2:lem4:thm2}
\PE^{(n)}\left[\phi_{\theta}(Z)\right]\leq e^{-\frac{1}{8} \r(\frac{\epsilon}{2})}\;\;\;\mbox{ and }\;\;\; \sup_{Q\in\mathcal{P}_\theta}\int_{\Xset^{(n)}} (1-\phi_{\theta}(z))Q(z)\rmd z\leq e^{-\frac{1}{8} \r(\frac{\epsilon}{2})}.
\end{multline}

Now, given $M>2$, we write $\{\theta\in\theta_\star + \Theta:\; \|\theta-\theta_\star\|_2>  M\tilde\epsilon\}=\cup_{j\geq 1}\textsf{B}(j)$, where
\[\textsf{B}(j)=\{\theta\in\theta_\star + \Theta, \mbox{ s.t. }\; jM\tilde\epsilon<\nnorm{\theta-\theta_\star}\leq (j+1)M\tilde \epsilon\}.\]
For each $j\geq 1$, let $\Sa_{j}$ be a maximal $(jM\tilde\epsilon/2)$-separated points in $\textsf{B}(j)$. For each $j$ for which $\textsf{B}(j)\neq \emptyset$, and each point $\theta_k\in\Sa_{j}$ we can construct a test function $\phi_{\theta_k}$ as above, with $\epsilon = jM\tilde\epsilon$. Then we set 
\[\phi=\sup_{j\geq 1}\max_{\theta_k\in \Sa_{j}} \phi_{\theta_k},\]
where the supremum in $j$ is over the indexes for which $\textsf{B}(j)\neq \emptyset$. Now, any $\theta\in \theta_\star +\Theta$ such that  $\nnorm{\theta-\theta_\star}>jM\tilde\epsilon$ will be within $iM\tilde \epsilon/2$ of a point $\theta_k$ in $\Sa_{i}$ for some $i\geq j$. Hence by (\ref{eq2:lem4:thm2}), for any such $\theta$,
\[\int_{\Xset^{(n)}} (1-\phi(z))Q_{n,\theta}(z)\rmd z\leq \int_{\Xset^{(n)}} (1-\phi_{\theta_k}(z))Q_{n,\theta}(z)\rmd z\leq e^{-\frac{1}{8}\r(\frac{jM\tilde\epsilon}{2})}.\]
Notice that the size of $\Sa_j$ is upper bounded by $\textsf{D}_j$. Using this and (\ref{eq2:lem4:thm2}), we get
\[\PE^{(n)}\left[\phi(Z)\right]\leq\sum_{j\geq 1}\textsf{D}_je^{-\frac{1}{8}\r(\frac{jM\tilde\epsilon}{2})},\]
which proves the lemma.
\end{proof}

\subsubsection{Proof of Theorem \ref{thm2}-Part(1)}\label{sec:proof:thm2-1}
For integer $k\geq 0$, let $\A_k\eqdef \{\theta\in\rset^d:\;\|\theta\|_0\geq s_\star +k\}$. We have
\[\PE^{(n)}\left(\check\Pi_{n,d}(\A_k\vert Z)\right) \leq  \PP^{(n)}\left(Z\notin \e_n\right) + T,\]
where  $T = \PE^{(n)}\left[\textbf{1}_{\e_n}(Z)\frac{\int_{\A_k}\frac{q_{n,\theta}(Z)}{q_{n,\theta_\star}(Z)}\Pi(\rmd\theta)}{\int_{\rset^d}\frac{q_{n,\theta}(Z)}{q_{n,\theta_\star}(Z)}\Pi(\rmd\theta)}\right]$. We use Lemma \ref{lem1:thm2}, and Fubini's theorem to write
\begin{eqnarray}\label{eq:thm1:controlT}
T & \leq &  \frac{1}{\pi_{\delta_\star}}\left(1+\frac{\bar L}{\rho^2}\right)^{s_\star} e^{\rho\|\theta_\star\|_1} \PE^{(n)}\left[\textbf{1}_{\e_n}(Z)\int_{\A_k}\frac{q_{n,\theta}(Z)}{q_{n,\theta_\star}(Z)}\Pi(\rmd\theta)\right]\nonumber\\
&= &  \frac{1}{\pi_{\delta_\star}}\left(1+\frac{\bar L}{\rho^2}\right)^{s_\star}\nonumber\\
&&\times \sum_{\delta\in\Delta_d}\pi_\delta\left(\frac{\rho}{2}\right)^{\|\delta\|_0} \int_{\A_k} \PE^{(n)}\left[\textbf{1}_{\e_n}(Z)\frac{q_{n,\theta}(Z)}{q_{n,\theta_\star}(Z)}\frac{e^{-\rho\|\theta\|_1}}{e^{-\rho\|\theta_\star\|_1}}\right]\mu_{d,\delta}(\rmd \theta).
\end{eqnarray}
We need to control the expectation on the right-side of (\ref{eq:thm1:controlT}). First note that $\e_{n,0}(\rset^d,\rho)=\{z\in\Xset^{(n)}:\; \|\nabla\log q_{n,\theta_\star}(z)\|_\infty\leq \frac{\rho}{2}\}$. With this in mind, we see that $z\in\e_n\subseteq \e_{n,0}(\rset^d,\rho)$, and $\theta\in\rset^d$, we have
\begin{eqnarray*}
\frac{q_{n,\theta}(z)}{q_{n,\theta_\star}(z)} &=& \exp\left[\pscal{\nabla\log q_{n,\theta_\star}(z)}{\theta-\theta_{\star}} +\L_{n,\theta}(z)\right],\\
&\leq & \exp\left[ \frac{\rho}{2}\|\theta-\theta_\star\|_1 + \L_{n,\theta}(z)\right].
\end{eqnarray*}
Setting $B(\theta) \eqdef \frac{\rho}{2}\|\theta-\theta_\star\|_1 + \rho(\|\theta_\star\|_1 -\|\theta\|_1)$, it follows that for all $\theta\in\rset^d$,
\begin{equation}\label{eq:thm1:control:exp}
\PE^{(n)}\left[\textbf{1}_{\e_n}(Z)\frac{q_{n,\theta}(Z)}{q_{n,\theta_\star}(Z)}\frac{e^{-\rho\|\theta\|_1}}{e^{-\rho\|\theta_\star\|_1}}\right] \leq e^{B(\theta)}
\PE^{(n)}\left[\textbf{1}_{\e_n}(Z)\exp\left( \L_{n,\theta}(Z)\right)\right].
\end{equation}
We then write
\begin{eqnarray}\label{eq:thm1:control:th_star}
\|\theta_\star\|_1 + \frac{1}{2}\|\theta-\theta_\star\|_1 & =  & \|\theta_\star\|_1 + \frac{1}{2}\|\theta\cdot\delta_\star^c\|_1 +\frac{1}{2}\|(\theta-\theta_\star)\cdot\delta_\star\|_1\nonumber\\
&\leq & \|\theta\|_1 -\frac{1}{2} \|\theta\cdot\delta_\star^c\|_1 + \frac{3}{2}\|(\theta-\theta_\star)\cdot\delta_\star\|_1.
\end{eqnarray}
Using this bound in the expression of $B(\theta)$ shows that if $\theta\notin\theta_\star + \N$, then we have
\begin{eqnarray}\label{eq:thm1:control:th_star:2}
B(\theta) &\leq & -\frac{\rho}{2}\|\theta\cdot\delta_\star^c\|_1 + \frac{3\rho}{2} \|(\theta-\theta_\star)\cdot\delta_\star\|_1 \\
&\leq & -\frac{\rho}{4}\|\theta-\theta_\star\|_1.\nonumber
\end{eqnarray}
This bound together with the fact that the expectation on the right-side of (\ref{eq:thm1:control:exp}) is always smaller or equal to $1$ (which follows from the concaveness assumption)  show that when $\theta\notin \theta_\star + \N$, 
\[
\PE^{(n)}\left[\textbf{1}_{\e_n}(Z)\frac{q_{n,\theta}(Z)}{q_{n,\theta_\star}(Z)}\frac{e^{-\rho\|\theta\|_1}}{e^{-\rho\|\theta_\star\|_1}}\right]
\leq e^{-\frac{\rho}{4}\|\theta-\theta_\star\|_1}.\]
Now, consider the case where $\N\neq\emptyset$, and $\theta-\theta_\star \in \N$. In that case, the definition of the set $\check\e_{n,1}(\N,\r)$ and  (\ref{eq:thm1:control:exp}) yield
\[
\PE^{(n)}\left[\textbf{1}_{\e_n}(Z)\frac{q_{n,\theta}(Z)}{q_{n,\theta_\star}(Z)}\frac{e^{-\rho\|\theta\|_1}}{e^{-\rho\|\theta_\star\|_1}}\right]\leq e^{B(\theta) - \frac{1}{2}\r(\|\theta-\theta_\star\|_2)}.\]
 From (\ref{eq:thm1:control:th_star:2}), 
\[B(\theta) - \frac{1}{2}\r(\|\theta-\theta_\star\|_2) \leq  -\frac{\rho}{2}\|\theta-\theta_\star\|_1 + 2\rho \|(\theta-\theta_\star)\cdot\delta_\star\|_1  - \frac{1}{2}\r(\|\theta-\theta_\star\|_2),\]
and
\begin{eqnarray*}
2 \rho \|(\theta-\theta_\star)\cdot\delta_\star\|_1  - \frac{1}{2}\r(\|\theta-\theta_\star\|_2) &\leq& 2\rho\sqrt{s_\star}\|\theta-\theta_\star\|_2 - \frac{1}{2}\r(\|\theta-\theta_\star\|_2),\\
& \leq & -\frac{1}{2}\left[\r(\|\theta-\theta_\star\|_2) - 4\rho\sqrt{s_\star}\|\theta-\theta_\star\|_2\right]\\
& \leq&  -\frac{1}{2} \inf_{x> 0} \left[\r(x)-4\rho s_\star^{1/2} x\right].\end{eqnarray*}
Therefore, when $\theta\neq \theta_\star$, and $\theta\in\theta_\star + \N$, we have
 \[\PE^{(n)}\left[\textbf{1}_{\e_n}(Z)\frac{q_{n,\theta}(Z)}{q_{n,\theta_\star}(Z)}\frac{e^{-\rho\|\theta\|_1}}{e^{-\rho\|\theta_\star\|_1}}\right] \leq e^{\textsf{a}} e^{-\frac{\rho}{2}\|\theta-\theta_\star\|_1},\]
where $\textsf{a} = -\frac{1}{2} \inf_{x> 0} \left[\r(x)-4\rho s_\star^{1/2} x\right]$. Note that $\textsf{a}>0$, since $\lim_{x\downarrow 0}\r(x)/x=0$. In view of these calculations and (\ref{eq:thm1:controlT}), we conclude that 
\[T \leq   e^{\textsf{a}}\left(1+\frac{\bar L}{\rho^2}\right)^{s_\star}\frac{1}{\pi_{\delta_\star}} \sum_{\delta\in\Delta_d}\pi_\delta\left(\frac{\rho}{2}\right)^{\|\delta\|_0} \int_{\A_k}e^{-\frac{\rho}{4}\|\theta-\theta_\star\|_1}\mu_{d,\delta}(\rmd \theta).\]
Note that $\mu_{d,\delta}(\A_k)=0$ if $\|\delta\|_0<s_\star+k$, and 
\[\left(\frac{\rho}{2}\right)^{\|\delta\|_0}\int_{\rset^d} e^{ -\frac{\rho}{4}\|\theta-\theta_\star\|_1}\mu_{d,\delta}(\rmd \theta)\leq \left(\frac{\rho}{2}\right)^{\|\delta\|_0}\left(\int_\rset e^{-\frac{\rho}{4}|z|}\rmd z\right)^{\|\delta\|_0} = 4^{\|\delta\|_0}.\]
Therefore,
\[T \leq   e^{\textsf{a}}\left(1+\frac{\bar L}{\rho^2}\right)^{s_\star}\frac{1}{\pi_{\delta_\star}} \sum_{\delta:\;\|\delta\|_0\geq s_\star +k}\pi_\delta 4^{\|\delta\|_0}.\]
Using H\ref{H2},
\begin{multline*}
\frac{1}{\pi_{\delta_\star}}\sum_{\delta:\;\|\delta\|_0\geq s_\star+k}\pi_\delta 4^{\|\delta\|_0} = \frac{{d\choose s_\star}}{g_{s_\star}} \sum_{j=s_\star+k}^d 4^j g_j 
\leq  \frac{{d\choose s_\star}}{g_{s_\star}} \sum_{j=s_\star+k}^d 4^j \left(\frac{c_2}{d^{c_4}}\right)^{j-s_\star}g_{s_\star} \\
= {d\choose s_\star} 4^{s_\star} \sum_{j=s_\star+k}^d \left(\frac{4c_2}{d^{c_4}}\right)^{j-s_\star}.\end{multline*}
For $d$ large enough so that $\frac{4c_2}{d^{c_4}}<1$, we have $\sum_{j=s_\star+k}^d \left(\frac{4c_2}{d^{c_4}}\right)^{j-s_\star}\leq 2\left(\frac{4c_2}{d^{c_4}}\right)^{k}$, which proves the stated bound.
\begin{flushright}
$\square$
\end{flushright}

\subsubsection{Proof of Theorem \ref{thm2}-Part(2)}
Define $U(\bar\epsilon)\eqdef\{\theta\in\theta_\star +\bar\Theta:\; \nnorm{\theta-\theta_\star}>M_0\bar\epsilon\}$. We apply  Lemma \ref{lem4:thm2}  with $\lambda=\bar\lambda$, $\Theta=\bar\Theta$,  the rate function $\r$ and with $M=M_0>2$. Notice $\bar\epsilon = \phi_\r(2\bar\lambda)$ is called $\tilde\epsilon$ in Lemma \ref{lem4:thm2}. By Lemma \ref{lem4:thm2} there exists a measurable functions $\phi:\;\Xset^{(n)}\to [0,1]$ such that 
\begin{equation}\label{bound2:proof:thm2}
\PE^{(n)}\left[\phi(Z)\right]\leq \sum_{j\geq 1}\textsf{D}_je^{-\frac{1}{8}\r(\frac{jM_0\bar\epsilon}{2})},\end{equation}
where $\textsf{D}_j \eqdef \textsf{D}\left(\frac{j M_0\bar \epsilon}{2},\textsf{B}_d(\bar\Theta,(j+1)M_0\bar\epsilon)\right)$.  Using the test function $\phi$, we have
\[\check\Pi_{n,d}\left(U(\bar\epsilon)\vert Z\right) \leq  \phi(Z) + (1-\phi(Z))\check\Pi_{n,d}\left(U(\bar\epsilon)\vert Z\right).\]
In view of (\ref{bound2:proof:thm2}), it remains only to control the expectation of $(1-\phi(Z))\check\Pi_{n,d}\left(U(\bar\epsilon)\vert Z\right)$. To do so, we set $\bar\e_n\eqdef \e_{n,0}(\bar\Theta,\bar \lambda)\cap\check\e_{n,1}(\bar \Theta,\r)$, so  that $\e_n\subseteq \bar \e_{n}\cap\hat\e_{n,1}(\bar \Theta,\bar L)$, and use Lemma \ref{lem1:thm2} and Fubini's theorem to write
\begin{multline}\label{bound3:proof:thm2}
\PE^{(n)}\left[(1-\phi(Z))\check\Pi_{n,d}\left(U(\bar\epsilon)\vert Z\right)\right]
=\PE^{(n)}\left[(1-\phi(Z))\frac{\int_{U(\bar\epsilon)}\frac{q_{n,\theta}(Z)}{q_{n,\theta_\star}(Z)}\Pi(\rmd \theta)}{\int \frac{q_{n,\theta}(Z)}{q_{n,\theta_\star}(Z)}\Pi(\rmd \theta)}\right]\\
\leq  \PP^{(n)}\left(Z\notin \e_n\right)  + \frac{1}{\pi_{\delta_\star}}\left(1+\frac{\rho^2}{\bar L}\right)^{s_\star}e^{\rho\|\theta_\star\|_1}\\ \times \int_{U(\bar\epsilon)}\PE^{(n)}\left[\textbf{1}_{\bar \e_n}(Z) (1-\phi(Z))\frac{q_{n,\theta}(Z)}{q_{n,\theta_\star}(Z)}\right]\Pi(\rmd \theta).\end{multline}
 We split $U(\bar\epsilon)$ as $U(\bar\epsilon)=\cup_{j\geq 1} \textsf{B}(j)$, where 
\[\textsf{B}(j)=\{\theta\in\theta_\star +\bar\Theta \mbox{ s.t. }\; j M_0 \bar\epsilon<\nnorm{\theta-\theta_\star}\leq (1+j) M_0 \bar\epsilon\}.\]
Therefore, and using the notation of Lemma \ref{lem4:thm2}, the integral in (\ref{bound3:proof:thm2}) is
\begin{multline*}
\int_{U_1(\bar\epsilon)}\PE^{(n)}\left[\textbf{1}_{\bar\e_n}(Z) (1-\phi(Z))\frac{q_{n,\theta}(Z)}{q_{n,\theta_\star}(Z)}\right]\Pi(\rmd \theta) \\
= \sum_{j\geq 1} \int_{\textsf{B}(j)}\left [\int_{\Xset^{(n)}} (1-\phi(z))Q_{n,\theta}(z)\rmd z\right] \Pi(\rmd \theta)\leq \sum_{j\geq 1}e^{-\frac{1}{8}\r(\frac{j M_0 \bar\epsilon}{2})}\Pi(\textsf{B}(j)).
\end{multline*}

From the prior $\Pi$, we  have
\[e^{\rho\|\theta_\star\|_1}\Pi(\textsf{B}(j)) = \sum_{\delta\in\Delta_d}\pi_\delta\left(\frac{\rho}{2}\right)^{\|\delta\|_0} \int_{\textsf{B}(j)} e^{\rho(\|\theta_\star\|_1-\|\theta\|_1)}\mu_{d,\delta}(\rmd\theta).\]
and for $\theta\in \textsf{B}(j)$,
\begin{multline*}
\rho(\|\theta_\star\|_1-\|\theta\|_1) \leq \rho\|\theta-\theta_\star\|_1 \leq -\frac{\rho}{2}\|\theta-\theta_\star\|_1 +\frac{3}{2}\rho\|\theta-\theta_\star\|_1 \\
\leq -\frac{\rho}{2}\|\theta-\theta_\star\|_1 +\frac{3}{2}\rho c_0\nnorm{\theta-\theta_\star}\leq -\frac{\rho}{2}\|\theta-\theta_\star\|_1 +3\rho c_0 jM_0\bar\epsilon\end{multline*}
where $c_0 =\sup_{u\in\bar\Theta}\sup_{v\in\bar \Theta,\;\nnorm{v}=1} |\pscal{\textsf{sign}(u)}{v}|$.  Hence
\begin{eqnarray*}
e^{\rho\|\theta_\star\|_1}\Pi(\textsf{B}(j)) & \leq & e^{3\rho c_0 jM_0\bar\epsilon}\sum_{\delta\in\Delta_d}\pi_\delta\left(\frac{\rho}{2}\right)^{\|\delta\|_0} \int_{\textsf{B}(j)} e^{-\frac{\rho}{2}\|\theta - \theta_\star\|_1}\mu_{d,\delta}(\rmd\theta),\\
& \leq & e^{3\rho c_0 jM_0\bar\epsilon}\sum_{\delta\in\Delta_d}\pi_\delta\left(\frac{\rho}{2}\right)^{\|\delta\|_0} \left(\int_{\rset} e^{-\frac{\rho}{2}|z|}\rmd z\right)^{\|\delta\|_0},\\
& = & e^{3\rho c_0 jM_0\bar\epsilon}\sum_{\delta\in\Delta_d}\pi_\delta 2^{\|\delta\|_0}.\end{eqnarray*}
Therefore, the second term on the right-hand side of (\ref{bound3:proof:thm2}) is upper bounded by
\[\frac{1}{\pi_{\delta_\star}}\left(\sum_{\delta\in\Delta_d}\pi_\delta 2^{\|\delta\|_0}\right) \left(1+\frac{\rho^2}{\bar L}\right)^{s_\star}  \sum_{k\geq 1}e^{-\frac{1}{8}\r(\frac{k M_0 \bar\epsilon}{2})} e^{3\rho c_0 k M_0\bar\epsilon}.\]
As in Part(1), using H\ref{H2} and for $d^{c_4}\geq 4c_2$,
\begin{multline*}
\pi_{\delta_\star}^{-1}\sum_{\delta\in\Delta_d}\pi_\delta 2^{\|\delta\|_0} = \frac{{d\choose s_\star}}{g_{s_\star}} \sum_{j=0}^d 2^jg_j \leq \frac{{d\choose s_\star}}{g_{s_\star}} g_0\sum_{j=0}^d\left(\frac{2c_2}{d^{c_4}}\right)^j\\
\leq 2 {d\choose s_\star} \frac{g_0}{g_{s_\star}} \leq 2  {d\choose s_\star}\left(\frac{d^{c_3}}{c_1}\right)^{s_\star}.\end{multline*}
This ends the proof. 

\begin{flushright}
$\square$
\end{flushright}

\subsection{Proof of Theorem \ref{thm1:logit}}\label{sec:proof:thm:logit}
Clearly, B\ref{B1} implies H\ref{H0}, and H\ref{H1} trivially holds true. Furthermore in this case the function $\L_{n,\theta}$ is given by
\[\L_{n,\theta}(z) = -\sum_{i=1}^n g(\pscal{x_i}{\theta}) -g(\pscal{x_i}{\theta_\star}) -g'(\pscal{x_i}{\theta_\star})\pscal{x_i}{\theta-\theta_\star},\]
which does not depend on $z$. To control this term, we will rely on a nice self-concordant properties of the logistic function $g(x)= \log(1+e^x)$ developed by \cite{bach10}~Lemma 1, which states that for all $x_0,u\in\rset$,
\begin{multline}\label{self:conc:2}
g^{(2)}(x_0)\left(e^{-|u|}+|u|-1\right)\leq g(x_0+u)-g(x_0)-g'(x_0)u\\
\leq g^{(2)}(x_0)\left(e^{|u|}-|u|-1\right).\end{multline}

\begin{proof}[Proof of Part(1)]
We shall apply Theorem \ref{thm2}-(Part 1). Clearly, $\theta\mapsto \log q_{n,\theta}(z)$ is concave for all $z\in\{0,1\}^n$. We define $H(x) \eqdef e^{-x}+x-1$. It can be checked that $H$ satisfies
\begin{equation}\label{prop:H}
H(x) \geq \frac{x^2}{2+x},\;\;x\geq 0.\end{equation}
This holds because $(2+x)H(x) -x^2 = (2+x)e^{-x} +x-2$, the derivative of which is $1-\frac{x+1}{e^x}\geq 0$, for all $x\geq 0$. Using (\ref{self:conc:2}), we get 
\[\L_{n,\theta}(z) \leq  -\sum_{i=1}^n g^{(2)}\left(\pscal{x_i}{\theta_\star}\right)H\left(|\pscal{x_i}{\theta-\theta_\star}|\right).\]
Furthermore, for $\theta-\theta_\star\in\N$, we have
\[\left|\pscal{x_i}{\theta-\theta_\star}\right| \leq \|X\|_\infty\|\theta-\theta_\star\|_1\leq 8\|X\|_\infty s_\star^{1/2} \|\theta-\theta_\star\|_2.\]
Using this, (\ref{prop:H}), and the definition of $\underline{\kappa}_1$, we get for all $z\in\{0,1\}^n$,
\begin{eqnarray}\label{upper:bound:L}
\L_{n,\theta}(z)&\leq &- \frac{n}{2+\max_{i}|\pscal{x_i}{\theta-\theta_\star}|}(\theta-\theta_\star)'\frac{X'WX}{n}(\theta-\theta_\star)\nonumber\\
&\leq&  - \frac{n\underline{\kappa}_1\|\theta-\theta_\star\|_2^2}{2+8\sqrt{s_\star}\|X\|_\infty\|\theta-\theta_\star\|_2},\nonumber\\
& = & -\frac{1}{2}\r(\|\theta-\theta_\star\|_2),\end{eqnarray}
where $\r(x)  =n\underline{\kappa}_1 x^2/(1+4\sqrt{s_\star}\|X\|_\infty x)$. Hence, with this particular choice of rate function $\PP^{(n)}(Z\notin \check\e_{n,1}(\N,\r)) = 0$. Since $g^{(2)}(x)\leq 1/4$, it follows that
\[\L_{n,\theta}(z) \geq -\frac{n}{8} (\theta-\theta_\star)'\frac{X'X}{n}(\theta-\theta_\star).\]
As a result, if $\theta-\theta_\star\in\Theta_\star$, $\L_{n,\theta}(z)\geq -(n/8)\bar\kappa_1(s_\star)\|\theta-\theta_\star\|_2^2$. Hence $\PP^{(n)}(Z\notin\hat\e_{n,1}(\Theta_\star,\bar L))=0$, for $\bar L= n\bar\kappa_1(s_\star)/4$. Finally, \;\; we \;\; have \;\; $\nabla \log q_{n,\theta_\star}(Z) = \sum_{i=1}^n \left(Z_i - g'(\pscal{x_i}{\theta_\star})\right)x_i$, and by Hoeffding's inequality, and a standard union bound argument,
\begin{eqnarray*}
\PP^{(n)}\left(Z\notin \e_{n,0}(\rset^d,\rho)\right)& = &\PP^{(n)}\left(\max_{1\leq j\leq d}\left|
\sum_{i=1}^n\left(Z_i - g'(\pscal{x_i}{\theta_\star})\right)X_{ij}\right|>\frac{\rho}{2}\right)\\
&\leq & 2\exp\left(\log(d) -\frac{\rho^2}{8\|X\|_\infty^2 n}\right) = \frac{2}{d},
\end{eqnarray*}
given the choice of $\rho$ in (\ref{rho:logit}) of the main paper. Hence we can apply Theorem \ref{thm2}-Part(1). This says that for any $k\geq 0$, 
\begin{multline*}
\PE^{(n)}\left[\check\Pi_{n,d}\left(\{\theta\in\rset^d:\;\|\theta\|_0\geq s_\star+ k\}\vert Z\right)\right] \leq \frac{2}{d} \\
+ 2e^{-\textsf{a}}\left(4+\frac{\bar\kappa_1(s_\star)}{16\|X\|_\infty^2\log(d)}\right)^{s_\star}{d \choose s_\star}\left(\frac{4c_2}{d^{c_4}}\right)^k,\end{multline*} 
where $\textsf{a}=(1/2)\inf_{x>0} \left[\r(x) -4\rho s_\star^{1/2} x\right]$. It is not hard to verify that for $\tau,b,c>0$, $\inf_{x>0}\left[\frac{\tau x^2}{1+bx}-cx\right]\geq -\frac{c^2}{4\sqrt{\tau}\sqrt{\tau-cb}}\geq -\frac{c^2}{2\tau}$, if $\tau\geq (4/3)bc$. In the case of $\textsf{a}$, the condition $\tau\geq (4/3)bc$ is satisfies if $\sqrt{n} \geq 64\times(4/3) \|X\|_\infty^2 s_\star\sqrt{\log(d)}/\underline{\kappa}_1$, and we have
\[\textsf{a}\geq -\frac{64\|X\|_\infty^2 s_\star\log(d)}{\underline{\kappa}_1}.\]
Using 
this and the combinatorial inequality ${d \choose s}\leq e^{s\log(de)}$, it follows that 
\begin{multline*}
\PE^{(n)}\left[\check\Pi_{n,d}\left(\{\theta\in\rset^d:\;\|\theta\|_0\geq s_\star+k\}\vert Z\right)\right] \leq \frac{2}{d} \\
+ 2\exp\left[s_\star\log(d)\left(1+\frac{64\|X\|_\infty^2}{\underline{\kappa}_1} +\frac{\bar\kappa_1(s_\star)}{64\|X\|_\infty^2\log(d)^2} +\frac{\log(4e)}{\log(d)}\right) + k\log\left(\frac{4c_2}{d^{c_4}}\right)\right].\end{multline*}
Then for $\alpha>0$, choose 
\begin{equation}\label{eq:k:ising}
k= \frac{2\alpha}{c_4} +\frac{2}{c_4}\left(1 + \frac{64\|X\|_\infty^2}{\underline{\kappa}_1} +\frac{\bar\kappa_1(s_\star)}{64\|X\|_\infty^2(\log(d))^2}+\frac{\log(4e)}{\log(d)}\right)s_\star,\end{equation}
to conclude that the second term on the right-hand side of the above inequality is upper-bounded by $\frac{2}{d^\alpha}$, provided that $d^{c_4/2}\geq 4c_2$. Setting $\alpha=1$ proves the theorem.
\end{proof}

\begin{proof}[Proof of Part(2)]
We apply Theorem \ref{thm2}-Part(2) with $\bar\lambda=\rho\sqrt{\bar s}$ with $\rho$ as in (\ref{rho:logit}) of the main paper, and $\bar s = \zeta +s_\star$, with $\zeta$ as in Part (1). We also choose $\bar L =n\bar\kappa_1(s_\star)/4$, $\bar \Theta= \{\theta\in\rset^d:\;\|\theta-\theta_\star\|_0\leq \bar s\}$, the rate function $\r(x) =  n\underline{\kappa}_1(\bar s) x^2/(1+\sqrt{\bar s}\|X\|_\infty x/2)$, and $\e_n = \e_{n,0}(\bar\Theta,\bar\lambda)\cap \hat\e_{n,1}(\Theta_\star,\bar L)\cap \check\e_{n,1}(\bar\Theta,\r)$. With similar calculations as in Part (1), it is easy to establish that 
\[\PP^{(n)}(Z\notin\e_n)\leq \frac{2}{d}.\]  
If $\theta\notin\bar\Theta$, then $\|\theta\|_0>\bar s-s_\star=\zeta$, and by Part (1), we conclude that 
\[\PE^{(n)}\left[\check\Pi_{n,d}(\rset^d\setminus\bar\Theta\vert Z)\right]\leq \frac{4}{d}.\] 
Recall that $\phi_{\r}(a) = \inf\{x>0:\; \r(z)-a z\geq 0, \mbox{ for all } z\geq x\}$. Since $\r(x) = n\underline{\kappa}_1(\bar s) x^2/(1+\sqrt{\bar s}\|X\|_\infty x/2)$, if $n\underline{\kappa}_1(\bar s)-\bar s^{1/2}\bar\lambda\|X\|_\infty>0$, then 
\[\bar\epsilon =\phi_\r(2\bar\lambda) = \frac{\bar\lambda}{n\underline{\kappa}_1(\bar s)-\bar s^{1/2}\bar\lambda\|X\|_\infty}.\]
Then we take $n$ large enough so that $(3/4)n\underline{\kappa}_1(\bar s)\geq \bar s^{1/2}\bar\lambda \|X\|_\infty$, to conclude that 
\[\bar\epsilon = \frac{\bar\lambda}{n\underline{\kappa}_1(\bar s)-\bar s^{1/2}\bar\lambda\|X\|_\infty} \leq \frac{4\bar\lambda}{n\underline{\kappa}_1(\bar s)}= \frac{16\|X\|_\infty}{\underline{\kappa}_1(\bar s)} \sqrt{\frac{\bar s\log(d)}{n}}<\infty.\]
The condition $(3/4)n\underline{\kappa}_1(\bar s)\geq \bar s^{1/2}\bar\lambda \|X\|_\infty$ translates into the sample size condition $\sqrt{n}\geq (16/3)\|X\|_\infty^2(\bar s/\underline{\kappa}_1(\bar s))\sqrt{\log(d)}$, which holds by assumption. We fix $M_0\geq \max(500,1+(c_3+c_4/2)/8)$, and  apply Theorem \ref{thm2} to get:
\begin{multline}\label{eq1:proof:thm1:logit}
\PE^{(n)}\left[\check\Pi_{n,d}\left(\left\{\theta\in\rset^d:\;\|\theta-\theta_\star\| >M_0\bar\epsilon\right\}\vert Z\right)\right]\leq \frac{6}{d} + \sum_{j\geq 1}\textsf{D}_je^{-\frac{1}{8}\r(\frac{j M_0 \bar\epsilon}{2})} \\
+2{d\choose s_\star}\left(\frac{d^{c_3}}{c_1}\right)^{s_\star}\left(1+\frac{\bar L}{\rho^2}\right)^{s_\star}\sum_{j\geq 1}e^{3\rho \bar s^{1/2}jM_0\bar\epsilon}e^{-\frac{1}{8}\r(\frac{j M_0 \bar\epsilon}{2})}.\end{multline}
 Since  $\phi_\r(a)$ is defined as $\inf\{x>0:\; \r(z)\geq az,\;\mbox{ for all } z\geq x\}$, and $jM_0\bar\epsilon/2\geq \bar\epsilon=\phi_\r(2\bar\lambda)$, we have $\r(jM_0\bar\epsilon/2)\geq 2\bar\lambda(jM_0\bar\epsilon/2) = \rho\sqrt{\bar s}jM_0\bar\epsilon$. Hence
\begin{equation}\label{eq11:proof:thm1:logit}
\sum_{j\geq 1} e^{-\frac{1}{8}\r\left(\frac{jM_0\bar\epsilon}{2}\right)}\leq \sum_{j\geq 1} e^{-\frac{1}{8}jM_0\sqrt{\bar s}\rho\bar\epsilon}= \frac{e^{-\frac{1}{8}M_0\sqrt{\bar s}\rho\bar\epsilon}}{1-e^{-\frac{1}{8}M_0\sqrt{\bar s}\rho\bar\epsilon}}\leq 2e^{-8M_0\bar s\log(d)}, \end{equation}
where the last inequality follows from the bounds 
\[\frac{1}{8}M_0\sqrt{\bar s}\rho\bar\epsilon \geq \frac{1}{8}M_0\sqrt{\bar s}\rho \left(\frac{\bar\lambda}{n\underline{\kappa}_1(\bar s)}\right) =2M_0 \frac{\bar s \|X\|_\infty^2}{\underline{\kappa}_1(\bar s)}  \log(d)\geq 8M_0 \bar s\log(d)\geq 1\]
 since $8M_0\bar s \geq 16M_0/c_4\geq 1$, and $\log(d)\geq 1$, by assumption. Using the arguments in Example 7.1 of \cite{ghosal:etal:00} shows that the packing numbers $\textsf{D}_j$ satisfies $\sup_{j\geq 1}\textsf{D}_j\leq {d\choose \bar s}(24)^{\bar s}\leq (24)^{\bar s}e^{\bar s\log(d e)}$. It follows that 
\begin{eqnarray*}
\sum_{j\geq 1}\textsf{D}_j e^{-\frac{1}{8}\r(\frac{j M_0 \bar\epsilon}{2})}  &\leq & 2\exp\left[\bar s\log(d)\left(1+\frac{\log(24e)}{\log(d)}-8 M_0\right)\right] \\
&\leq & \frac{2}{d},
\end{eqnarray*}
provided that $\log(d)\geq 1$, and using the condition $8M_0 \geq c_4/2 + 1 + \log(24e)$. Setting  $x=jM_0\bar\epsilon/2$, we have
\begin{eqnarray}\label{eq12:proof:thm1:logit}
3\rho\sqrt{\bar s}jM_0\bar\epsilon-\frac{1}{8}\r(\frac{j M_0 \bar\epsilon}{2})&\leq& -\frac{x}{8}\left(\frac{n\underline{\kappa}_1(\bar s) x}{1+\frac{1}{2}\sqrt{\bar s}\|X\|_\infty x} - 48\rho\sqrt{\bar s}\right),\nonumber\\
&\leq & -\frac{x}{8}\left(\frac{n\underline{\kappa}_1(\bar s) \frac{M_0\bar\epsilon}{2}}{1+\frac{1}{2}\sqrt{\bar s}\|X\|_\infty \frac{M_0\bar\epsilon}{2}} - 48\rho\sqrt{\bar s}\right)\nonumber\\
&\leq &-\frac{2\rho\sqrt{\bar s}x}{8},
\end{eqnarray}
provided that 
\[\frac{n\underline{\kappa}_1(\bar s) \frac{M_0\bar\epsilon}{2}}{1+\frac{1}{2}\sqrt{\bar s}\|X\|_\infty \frac{M_0\bar\epsilon}{2}} - 48\rho\sqrt{\bar s} \geq 2\rho\sqrt{\bar s}.\]
This latter condition holds for all $M_0\geq 500$, if $\sqrt{n}> 125\bar s\|X\|_\infty^2\sqrt{\log(d)}/\underline{\kappa}_1(\bar s)$. In which case, from (\ref{eq12:proof:thm1:logit}) we have
\[\sum_{j\geq 1}e^{3\rho \bar s^{1/2}jM_0\bar\epsilon}e^{-\frac{1}{8}\r(\frac{j M_0 \bar\epsilon}{2})} \leq \sum_{j\geq 1} e^{-\frac{1}{8}jM_0\sqrt{\bar s}\rho\bar\epsilon} \stackrel{(a)}{\leq} 2e^{-8M_0\bar s\log(d)},\]
where the inequality (a) uses (\ref{eq11:proof:thm1:logit}).  In conclusion, the last term on the right-hand side of (\ref{eq1:proof:thm1:logit}) is upper-bounded by
\begin{multline}\label{eq2:proof:thm1:logit}
4{d\choose s_\star} \left(\frac{d^{c_3}}{c_1}\right)^{s_\star}\left(1+\frac{\bar L}{\rho^2}\right)^{s_\star}e^{-8M_0\bar s\log(d)} \leq4\exp\left[s_\star\log(d)\left( 1+c_3 +\frac{\log(e/c_1)}{\log(d)} \right.\right.\\
\left.\left. + \frac{\bar\kappa_1(s_\star)}{64\|X\|_\infty^2\log(d)^2}\right)-8M_0\bar s\log(d)\right].
\end{multline}
Given  that $\bar s =s_\star +\zeta$ with $\zeta$ as in Part (1), since $\log(d)\geq \log(e/c_1)$, and $8M_0 \geq 2+c_3$, we see that the  right-side of (\ref{eq2:proof:thm1:logit}) is upper-bounded by $4(1/d)^{16M_0/c_4}\leq (4/d)$. The theorem follows.
\end{proof}

\subsection{Proof of Theorem \ref{thm:Ising:1} and \ref{thm:Ising:3}}\label{sec:proof:thm:Ising}
It is obvious that H\ref{H0} and H\ref{H1} hold for this example. For convenience in the notation, for $z\in\rset^{n\times p}$, $1\leq j\leq p$, we let $z^{(j)}\in\rset^{n\times p}$ be the matrix obtained by replacing all the components of the $j$-th column of $z$ by $1$. We introduce
\begin{multline*}
q^{(j)}_{n,u}(z) \eqdef \prod_{i=1}^n \frac{\exp\left(z_{ij}\left(u_j +\sum_{k\neq j} u_k z_{ik}\right)\right)}{1+\exp\left(u_j + \sum_{k\neq j} u_k z_{ik}\right)} ,\\
\;\;\mbox{ and }\;\; \L_{n,u}^{(j)}(z) \eqdef \log q^{(j)}_{n,u}(z) - \log q^{(j)}_{n,\theta_{\star \cdot j}}(z) -\pscal{\nabla\log q^{(j)}_{n,\theta_{\star \cdot j}}(z)}{u-\theta_{\star \cdot j}},\;\;\;u\in\rset^p.\end{multline*}
The function $u\mapsto q^{(j)}_{n,u}(z)$ is the likelihood function of the logistic regression model of the $j$-column of $z$ on $z^{(j)}$. Let $\H_n^{(j)}(z)\eqdef-\nabla^{(2)}\log q_{n,\theta_{\star \cdot j}}(z)$. Specifically, we have
\[(\H_n^{(j)}(z))_{st} \eqdef \sum_{i=1}^n g^{(2)}\left(\theta_{\star jj} +\sum_{k\neq j} z_{ik}\theta_{\star kj}\right)z^{(j)}_{i s}z^{(j)}_{i t},\;\;1\leq s,t\leq p.\]
We will need the following restricted smallest eigenvalues of $\H_n^{(j)}(z)$.
\[\underline{\kappa}_2^{(j)}(z) \eqdef \inf\left\{\frac{u'(\H_n^{(j)}(z))u}{n\|u\|^2},\; u\in\rset^p\setminus\{0\},\; \sum_{k:\;\delta_{\star,kj}=0}|u_k|\leq 7\sum_{k:\;\delta_{\star,kj}=1}|u_k|.\right\}.\]
\[\underline{\kappa}_2^{(j)}(s,z) \eqdef \inf\left\{\frac{u'(\H_n^{(j)}(z))u}{n\|u\|^2},\;u\in\rset^p\setminus\{0\},\; \|u\|_0\leq s.\right\}.\]
\[\underline{\kappa}_2(z)=\inf_{1\leq j\leq p} \underline{\kappa}_2^{(j)}(z),\;\;\mbox{ and }\;\; \underline{\kappa}_2(s,z)=\inf_{1\leq j\leq p} \underline{\kappa}_2^{(j)}(s,z).\]

The next result shows that if $\underline{\kappa}_2(s)>0$ and $\underline{\kappa}_2>0$ (with $\underline{\kappa}_2(s)$ and $\underline{\kappa}_2$ as defined in (\ref{def:kappa:ising}) of the main paper, then with high probability $\underline{\kappa}_2(Z)>0$ and $\underline{\kappa}_2(s,Z)>0$. The proof is an easy modification of the argument of \cite{atchade:ejs14}~Lemma 2.5. We omit the details.

\begin{lemma}\label{lem:tech_lem:Ising}
Assume C\ref{C1}. There exist finite universal constants $a_1, a_2$ such that the following two statements holds true.
\begin{enumerate}
\item For $1\leq s\leq p$, if $\underline{\kappa}_2(s)>0$, and $n\geq a_1\left(\frac{s}{\underline{\kappa}_2(s)}\right)^2\log(p)$, then
\[\PP^{(n)}\left(\underline{\kappa}_2(s,Z)\leq \frac{\underline{\kappa}_2(s)}{2}\right)\leq e^{-a_2 n}.\]
\item If $\underline{\kappa}_2>0$, and $n\geq a_1\left(\frac{s_\star}{\underline{\kappa}}\right)^2\log(p)$, then
\[\PP^{(n)}\left(\underline{\kappa}_2(Z)\leq \frac{\underline{\kappa}_2}{2}\right)\leq e^{-a_2 n}.\]
\end{enumerate}
\end{lemma}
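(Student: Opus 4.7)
The plan is to derive both parts from an entry-wise Hoeffding concentration for the Hessian matrices combined with the crude but effective inequality $|u'A u|\leq \|A\|_{\max}\|u\|_1^2$. Define $A^{(j)} \eqdef (1/n)\H_n^{(j)}(Z) - \mathcal{H}^{(j)}$. Since $\H_n^{(j)}(Z) = \sum_{i=1}^n g^{(2)}\bigl(\pscal{\theta_{\star \cdot j}}{Z_{i(j)}}\bigr)Z_{i(j)}Z_{i(j)}'$, each entry of $A^{(j)}$ is an average of $n$ centered i.i.d.\ random variables supported in $[-1/4,1/4]$ (because $g^{(2)}\leq 1/4$ and $Z_{i(j)}\in\{0,1\}^p$), with mean zero. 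Hoeffding's inequality combined with a union bound over the $p$ values of $j$ and the $p^2$ pairs of entries yields
\[\PP^{(n)}\Bigl(\max_{1\leq j\leq p}\|A^{(j)}\|_{\max}>t\Bigr)\;\leq\; 2p^3\, e^{-32nt^2}.\]

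For Part~(1), every unit vector $u$ with $\|u\|_0\leq s$ satisfies $\|u\|_1\leq\sqrt{s}$, so
\[u'\bigl[(1/n)\H_n^{(j)}(Z)\bigr]u\;\geq\;u'\mathcal{H}^{(j)}u - s\|A^{(j)}\|_{\max}\;\geq\;\underline{\kappa}_2(s) - s\|A^{(j)}\|_{\max}.\]
Taking infima over such $u$ and over $j$ yields $\underline{\kappa}_2(s,Z)\geq \underline{\kappa}_2(s) - s\max_j\|A^{(j)}\|_{\max}$, so choosing $t=\underline{\kappa}_2(s)/(2s)$ in the concentration bound gives
\[\PP^{(n)}\bigl(\underline{\kappa}_2(s,Z)\leq\underline{\kappa}_2(s)/2\bigr)\;\leq\;2p^3\exp\bigl(-8n(\underline{\kappa}_2(s)/s)^2\bigr).\]
The sample-size condition $n\geq a_1 (s/\underline{\kappa}_2(s))^2\log p$ then absorbs the factor $2p^3$ into the exponential once $a_1$ is chosen large enough relative to $a_2$.

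Part~(2) follows by the same scheme with the sparsity bound replaced by the $\ell^1$-cone consequence: for $u$ with $\|u\|_2=1$ satisfying the cone condition $\sum_{k:\delta_{\star,kj}=0}|u_k|\leq 7\sum_{k:\delta_{\star,kj}=1}|u_k|$, one has $\|u\|_1\leq 8\sqrt{s_{\star j}}\leq 8\sqrt{s_\star}$, hence $\|u\|_1^2\leq 64 s_\star$. Choosing $t=\underline{\kappa}_2/(128 s_\star)$ and arguing as above gives $\PP^{(n)}(\underline{\kappa}_2(Z)\leq\underline{\kappa}_2/2)\leq 2p^3\exp\bigl(-n(\underline{\kappa}_2/s_\star)^2/512\bigr)$, which is bounded by $e^{-a_2 n}$ under $n\geq a_1(s_\star/\underline{\kappa})^2\log p$.

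The main (and essentially only) subtlety is constant management: the clean form $e^{-a_2 n}$ with universal $a_2$ requires that the $2p^3$ union-bound factor be dominated by the Hoeffding exponent. This works because the sample-size lower bound forces $\log p\leq n\underline{\kappa}_2(s)^2/(a_1 s^2)$, so the exponent is at least of order $(8a_1-3)/a_1$ times $\log p$; one simply chooses $a_1$ large enough. No probabilistic ingredient finer than Hoeffding and the max-norm--$\ell^1$ bound is needed, which is exactly the structure of \cite{atchade:ejs14}~Lemma~2.5, now applied to the column-wise Fisher information $\mathcal{H}^{(j)}$ arising from the pseudo-likelihood instead of the joint Ising information.
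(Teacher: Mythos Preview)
The paper omits the proof entirely, referring to Lemma~2.5 of \cite{atchade:ejs14}; your argument---entry-wise Hoeffding for $n^{-1}\H_n^{(j)}(Z)-\mathcal H^{(j)}$, followed by $|u'Au|\leq \|A\|_{\max}\|u\|_1^2$ and the $\ell^1$/$\ell^2$ comparison on sparse or cone-restricted unit vectors---is exactly the structure of that cited lemma, so the approach is the intended one and the core steps are correct.

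There is, however, a genuine slip in your constant management. From $2p^3\exp\bigl(-8n\,\underline{\kappa}_2(s)^2/s^2\bigr)$ and the sample-size condition you correctly obtain $3\log p\leq 3n\,\underline{\kappa}_2(s)^2/(a_1 s^2)$, which yields a bound of the form
\[
2\exp\Bigl(-n\,\frac{\underline{\kappa}_2(s)^2}{s^2}\,\bigl(8-3/a_1\bigr)\Bigr),
\]
not $e^{-a_2 n}$: the exponent still carries the problem-dependent factor $\underline{\kappa}_2(s)^2/s^2$, which is not universally bounded below, so no universal $a_2$ makes this $\leq e^{-a_2 n}$ for all admissible $(n,p,s,\underline{\kappa}_2(s))$. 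Your own sentence ``the exponent is at least of order $(8a_1-3)/a_1$ times $\log p$'' shows the gap---a multiple of $\log p$ (equivalently of $n\,\underline{\kappa}_2(s)^2/s^2$) is not a multiple of $n$. The same issue arises in Part~(2). The argument is otherwise sound and delivers the bound $\exp\bigl(-c\,n\,\underline{\kappa}_2(s)^2/s^2\bigr)$ (respectively $\exp\bigl(-c\,n\,\underline{\kappa}_2^2/s_\star^2\bigr)$), which is what the downstream proofs in Section~\ref{sec:proof:thm:Ising} actually need; the literal statement of the lemma with a universal $a_2$ in $e^{-a_2 n}$ appears to be a slight overstatement that your method cannot, and should not be expected to, recover.
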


\begin{proof}[Proof of Theorem \ref{thm:Ising:1}-Part(1)]
We will reduce the result to Theorem \ref{thm1:logit}~Part(1). We set 
\begin{equation*}\label{def:setG}
\G^{(j)}\eqdef \{z\in\rset^{n\times p}:\; \underline{\kappa}_2^{(j)}(z)> \underline{\kappa}_2/2\},\;\;\mbox{ and }\;\; \G\eqdef \{z\in\rset^{n\times p}:\; \underline{\kappa}_2(z)> \underline{\kappa}_2/2\}.\end{equation*}
We also define $\A^{(j)}\eqdef \{u\in\rset^p:\;\|u\|_0\leq \zeta_j\}$. Define $\Theta\eqdef \{\theta\in\rset^{p\times p}:\; \|\theta_{\cdot j}\|_0\leq \bar s_j,\;1\leq j\leq p\}$. Hence if $\theta\notin\Theta$, then $\theta_{\cdot j}\notin \A^{(j)}$, for some $j$. Therefore,
\[\PE^{(n)}\left[\check\Pi_{n,d}\left(\rset^{d\times d}\setminus \Theta\vert Z\right)\right] \leq \PP^{(n)}\left(Z\notin \G\right) + \sum_{j=1}^p \PE^{(n)}\left[\textbf{1}_{\G}(Z)\check\Pi_{n,d,j}\left(\rset^d\setminus \A^{(j)}\vert Z\right)\right].\]
Note that $\G\subseteq\G^{(j)}$, and $\{Z\in \G^{(j)}\}$ is $Z^{(j)}-$measurable. Hence by conditioning on $Z^{(j)}$, we get
\begin{multline*}
\PE^{(n)}\left[\check\Pi_{n,d}\left(\rset^{d\times d}\setminus \Theta\vert Z\right)\right] \leq \PP^{(n)}\left(Z\notin \G\right) \\
+ \sum_{j=1}^p \PE^{(n)}\left[\textbf{1}_{\G^{(j)}}(Z)\PE^{(n)}\left[\check\Pi_{n,d,j}\left(\rset^d\setminus \A^{(j)}\vert Z\right)\vert Z^{(j)}\right]\right]
\end{multline*}
By conditioning on $Z^{(j)}$, and  for $Z\in\G^{(j)}$, we are taken back to the setting of the standard logistic regression with a well-behaved design matrix. With the choice of $\zeta_j$, and since $\rho$ in (\ref{rho:ising}) of the main paper is taken larger than $4\sqrt{n\log(p)}$, by Theorem \ref{thm1:logit} (1), there exists an absolute constant $A_1$ such that for $p^{c_4}\geq 8c_2\max(1,2c_2)$, and  $n\geq A_1(s_\star/\underline{\kappa}_2)^2\log(p)$, we have
\[\PE^{(n)}\left[\check\Pi_{n,d,j}\left(\rset^d\setminus \A^{(j)}\vert Z\right)\vert Z^{(j)}\right]\ \leq \frac{4}{p^2}.\]
The term $p^2$ in $4/p^2$ comes from using $\alpha=2$ in (\ref{eq:k:ising}). Without any loss of generality  we can take $A_1$ as large as the constant $a_1$ in Lemma \ref{lem:tech_lem:Ising} to conclude that  $\PP^{(n)}\left(Z\notin \G\right)\leq e^{-a_2 n}$. Hence
\[\PE^{(n)}\left[\check\Pi_{n,d}\left(\rset^{d\times d}\setminus \Theta\vert Z\right)\right] \leq e^{-a_2 n} + \frac{4}{p},\]
as claimed.
\end{proof}

\begin{proof}[Proof of Theorem \ref{thm:Ising:1}-Part(2)]
We shall apply Theorem \ref{thm2}-Part(2). We will apply the theorem with the split cone 
\[\bar\Theta \eqdef \{\theta\in\rset^{d\times d}:\; \|\theta_{\cdot j}\|_0 \leq \bar s_j,\; 1\leq j\leq p\}.\]
Here the norm $\|\cdot\|_2$ in Theorem \ref{thm2} is the Frobenius norm $\normfro{\cdot}$, whereas the notation $\|\cdot\|_2$ in what follows will denote the Euclidean norm on $\rset^p$. Notice that if $\theta\notin\theta_\star +\bar\Theta$, then $\theta\notin \Theta$ (where $\Theta$ is as defined in Theorem \ref{thm:Ising:1}). Hence we will use Theorem \ref{thm:Ising:1} to control the term $\PE^{(n)}\left(\check\Pi_{n,p}(\rset^{d\times d}\setminus (\theta_\star +\bar\Theta)\vert Z)\right)$. More precisely, there exist universal positive constants $A_1, A_2$ such that for $p^{c_4}\geq 8c_2\max(1,2c_2)$, and $n\geq A_1(s_\star/\underline{\kappa}_2)^2\log(p)$,
\begin{equation}\label{eq0:proof:ising}
\PE^{(n)}\left(\check\Pi_{n,p}(\rset^{d\times d}\setminus (\theta_\star +\bar\Theta)\vert Z)\right)
\leq e^{-A_2n} +\frac{4}{p}.\end{equation}

Set $\bar S\eqdef \sum_{j=1}^p\bar s_j$, $\bar\lambda = \rho\sqrt{\bar S}$, $\bar L=ns_\star/4$, $\r(x) =n\underline{\kappa}(\bar s)x^2/(2+\bar S^{1/2}x)$, and consider $\e_n=\e_{n,0}(\bar\Theta,\bar\lambda)\cap \hat\e_{n,1}(\Theta_\star,\bar L)\cap\check\e_{n,1}(\bar\Theta,\r)$.
We have
\[\sup_{u\in\bar\Theta,\;\normfro{u}=1}\left|\pscal{\nabla \log q_{n,\theta_\star}(Z)}{u}_{\textsf{F}}\right|\leq \sqrt{\sum_{j=1}^p\bar s_j}\|\nabla\log q_{n,\theta_\star}(Z)\|_\infty.\]
Using this and a standard Hoeffding inequality, we obtain that 
\begin{equation}\label{eq:control:e_0}
\PP^{(n)}\left(Z\notin \e_{n,0}(\bar\Theta,\bar\lambda)\right)\leq 2\exp\left(2\log(p)-\frac{1}{2n}\left(\frac{\bar\lambda}{2\sqrt{\sum_{j=1}^p\bar s_j}}\right)^2\right) \leq \frac{2}{p},\end{equation}
given the choice of $\bar\lambda$, and $\rho$ in (\ref{rho:ising}).

We use a second order Taylor expansion of $u\mapsto q^{(j)}_{n,u}(z)$ around $\theta_{\star \cdot j}$ and the fact that $g^{(2)}(x)\leq 1/4$ to deduce that for all $\theta\in\theta_\star + \Theta_\star$
\[\L_{n,\theta}(z) =-\frac{n}{8}\sum_{j=1}^p(\theta_{\cdot j}-\theta_{\star \cdot j})'\left(\frac{[Z^{(j)}]'[Z^{(j)}]}{n}\right)(\theta_{\cdot j}-\theta_{\star \cdot j})\geq -\frac{n s_\star}{8}\normfro{\theta-\theta_\star}^2.\]
Hence with $\bar L=ns_\star/4$, 
\begin{equation}\label{eq:control:he_1}
\PP^{(n)}(Z\notin \hat\e_{n,1}(\Theta_\star,\bar L))=0.\end{equation}

Consider the set 
\[\G^{(j)}\eqdef \{z\in\rset^{n\times p}:\; \underline{\kappa}^{(j)}(\bar s, z)> \underline{\kappa}(\bar s)/2\},\;\;\mbox{ and }\;\; \G\eqdef \{z\in\rset^{n\times p}:\; \underline{\kappa}(\bar s,z)> \underline{\kappa}(\bar s)/2\}.\]
Take $Z\in\G$. Then for all $j$, $\underline{\kappa}(\bar s, Z^{(j)})>\underline{\kappa}(\bar s)/2$ and we can use the same argument in (\ref{upper:bound:L}) to conclude that for $\theta-\theta_\star\in\bar\Theta$,
\[\L^{(j)}_{n,\theta_{\cdot j}}(Z) \leq -\frac{n\underline{\kappa}(\bar s)\|\theta_{\cdot j}-\theta_{\star \cdot j}\|_2^2/2}{2+\sqrt{\bar s_j}\|\theta_{\cdot j}-\theta_{\star \cdot j}\|_2}.\]
It follows that for $\theta-\theta_\star\in\bar\Theta$,
\[\L_{n,\theta}(Z)\leq -\sum_{j=1}^p \frac{n\underline{\kappa}(\bar s)\|\theta_{\cdot j}-\theta_{\star \cdot j}\|_2^2/2}{2+\sqrt{\bar s_j}\|\theta_{\cdot j}-\theta_{\star \cdot j}\|_2}\leq -\frac{1}{2}\frac{n\underline{\kappa}(\bar s)\normfro{\theta-\theta_\star}^2}{2+\bar S^{1/2}\normfro{\theta-\theta_\star}} = -\frac{1}{2}\r(\normfro{\theta-\theta_\star}).\]
Hence, with the rate function $\r(x) =n\underline{\kappa}(\bar s)x^2/(2+\bar S^{1/2}x)$, we have
\begin{equation}\label{eq:control:ce_1}
\PP^{(n)}\left(Z\notin \check\e_{n,1}(\bar\Theta,\r)\right)\leq \PP^{(n)}\left(Z\notin \G\right) \leq e^{-a_2n},\end{equation}
as seen in Lemma \ref{lem:tech_lem:Ising}, provided that $n\geq A_1\left(\frac{\bar s}{\underline{\kappa}(\bar s)}\right)^2\log(p)$ (without any loss of generality, we take $A_1$ greater than the constant $a_1$ in Lemma \ref{lem:tech_lem:Ising}). Hence, with $\e_n= \e_{n,0}(\bar\Theta,\bar\lambda)\cap \hat\e_{n,1}(\Theta_\star,\bar L)\cap \check\e_{n,1}(\bar\Theta,\r)$, it follows from (\ref{eq:control:e_0})-(\ref{eq:control:ce_1}) that for $n\geq A_1\left(\frac{\bar s}{\underline{\kappa}(\bar s)}\right)^2\log(p)$
\begin{equation}\label{control:e:ising}
\PP^{(n)}\left(Z\notin \e_n\right)\leq e^{-a_2n} + \frac{2}{p}.
\end{equation}

Finally, we note that with the same calculations as in the proof of Theorem \ref{thm1:logit}~Part(2), we can choose the constant $a_1$ such that for $n\geq A_1\left(\frac{\bar S}{\underline{\kappa}(\bar s)}\right)^2\log(p)$,
\[\frac{2\bar\lambda}{n\underline{\kappa}(\bar s)}\leq \bar\epsilon =\phi_\r(2\bar\lambda)\leq \frac{4\bar\lambda}{n\underline{\kappa}(\bar s)} \leq \frac{96}{\underline{\kappa}(\bar s)}\sqrt{\frac{\bar S\log(p)}{n}}<\infty.\]
We are then ready to apply Theorem \ref{thm2}-Part(2). Fix $M_0\geq \max(500,1 + (c_3+c_4/2)/8)$, set $V\eqdef \{\theta\in\rset^{d\times d}:\;\normfro{\theta-\theta_\star}>M_0\bar\epsilon\}$, then for $n\geq A_1\left(\frac{\bar S}{\underline{\kappa}(\bar s)}\right)^2\log(p)$, (\ref{thm2:maineq}), (\ref{eq0:proof:ising}), and (\ref{control:e:ising}) give
\begin{multline}\label{eq:proof:thm:ising}
\PE^{(n)}\left[\check\Pi_{n,d}(V\vert Z)\right]\leq \left(e^{-A_2 n} +\frac{4}{p}\right) + \left(e^{-a_2n}+\frac{2}{p}\right) + \sum_{j\geq 1}\textsf{D}_je^{-\frac{1}{8}\r(\frac{j M_0 \bar\epsilon}{2})}\\
+\left\{\prod_{j=1}^p2{p\choose s_{\star j}} \left(\frac{p^{c_3}}{c_1}\right)^{s_{\star j}}\left(1+\frac{\bar L}{\rho^2}\right)^{s_{\star j}}\right\}  \sum_{k\geq 1}e^{-\frac{1}{8}\r(\frac{k M_0 \bar\epsilon}{2})} e^{3\rho c_0 k M_0\bar\epsilon}.\end{multline}
Similar calculations as in the proof of Theorem \ref{thm1:logit}~Part(2) shows that 
\[\sum_{j\geq 1}\textsf{D}_je^{-\frac{1}{8}\r(\frac{j M_0 \bar\epsilon}{2})} \leq \frac{2}{p},\;\mbox{ and }\;\; \sum_{j\geq 1}e^{-\frac{1}{8}\r(\frac{j M_0 \bar\epsilon}{2})} e^{3\rho c_0 j M_0\bar\epsilon}\leq 2 e^{-16M_0\bar S\log(p)},\]
and
\begin{multline*}
\prod_{j=1}^p2{p\choose s_{\star j}} \left(\frac{p^{c_3}}{c_1}\right)^{s_{\star j}}\left(1+\frac{\bar L}{\rho^2}\right)^{s_{\star j}}\\
\leq 2^p\exp\left(\left(\sum_{j=1}^p s_{\star j}\right)\log(p) \left(1+c_3 +\frac{\log(e/c_1)}{\log(p)} +\frac{s_\star}{4(24^2)\log(p)^2}\right)\right).\end{multline*}
Hence, and by the same argument as in the proof of Theorem \ref{thm1:logit}~Part(2), the last term on the right-side of (\ref{eq:proof:thm:ising}) is bounded by $4/p$.
\end{proof}

\begin{proof}[Proof of Theorem \ref{thm:Ising:3}]
We will reduce this result to Theorem \ref{thm1:logit}~Part(2). We set 
\[\mathcal{V}\eqdef\{\theta\in\rset^{p\times p}:\; \|\theta_{\cdot j}\|_2> \epsilon_j,\;\mbox{ for some } j\},\]
and $\bar{\mathcal{V}} \eqdef \bar\Theta \cap \mathcal{V}$, where $\bar\Theta = \{\theta\in\rset^{d\times d}:\; \|\theta_{\cdot j}\|_0 \leq \bar s_j,\; 1\leq j\leq p\}$. Using Theorem \ref{thm:Ising:1} as we did in (\ref{eq0:proof:ising}), there exist universal positive constants $A_1, A_2$ such that for $p^{c-4}\geq 8c_2\max(1,2c_2)$, and $n\geq A_1(s_\star/\underline{\kappa}_2)^2\log(p)$,
\begin{eqnarray*}
\PE^{(n)}\left[\check\Pi_{n,d}(\mathcal{V}\vert Z)\right] &\leq & \PE^{(n)}\left[\check\Pi_{n,d}(\rset^{d\times d}\setminus\bar\Theta\vert Z)\right] + \PE^{(n)}\left[\check\Pi_{n,d}(\bar{\mathcal{V}}\vert Z)\right],\\
& \leq & e^{-A_2 n} + \frac{4}{p} + \PE^{(n)}\left[\check\Pi_{n,d}(\bar{\mathcal{V}}\vert Z)\right].\end{eqnarray*}
We define
\begin{equation*}
\G^{(j)}\eqdef \{z\in\rset^{n\times p}:\; \underline{\kappa}_2^{(j)}(z)> \underline{\kappa}_2/2\},\;\;\mbox{ and }\;\; \G\eqdef \{z\in\rset^{n\times p}:\; \underline{\kappa}_2(z)> \underline{\kappa}_2/2\}.\end{equation*}
We also define $\A^{(j)}\eqdef \{u\in\rset^p:\;\|u\|_0\leq \bar s_j,\;\mbox{ and }\; \|u\|_2>\epsilon_j\}$. Hence, if $\theta\in\bar{\mathcal{V}}$, then $\theta_{\cdot j}\in \A^{(j)}$, for some $j$. Therefore,
\begin{multline*}
\PE^{(n)}\left[\check\Pi_{n,d}(\bar{\mathcal{V}}\vert Z)\right] \leq \PP^{(n)}\left(Z\notin \G\right) + \sum_{j=1}^p \PE^{(n)}\left[\textbf{1}_{\G^{(j)}}(Z)\PE^{(n)}\left[\check\Pi_{n,d,j}\left(\A^{(j)}\vert Z\right)\vert Z^{(j)}\right]\right]
\end{multline*}
Fix $M_0\geq \max(125,c_4(1+c_3)/64)$. $\PE^{(n)}\left[\check\Pi_{n,d,j}\left(\A^{(j)}\vert Z\right)\right]$ is the same as the posterior distribution of the logistic regression of the $j$-th column of $Z$ on $Z^{(j)}$, and for $Z^{(j)}\in \G^{(j)}$, we can apply Theorem \ref{thm1:logit}~Part(2). Hence, we can take $A_1$ large enough so that for $p\geq e(1+c_1)/c_1$, and $n\geq A_1(\bar s/\underline{\kappa}_2)^2\log(p)$,
\[\textbf{1}_{\G^{(j)}}(Z)\PE^{(n)}\left[\check\Pi_{n,d,j}\left(\A^{(j)}\vert Z\right)\vert Z^{(j)}\right]\leq \frac{8}{p^2}.\]
Hence
\[\PE^{(n)}\left[\check\Pi_{n,d}(\mathcal{V}\vert Z)\right] \leq 2e^{-A_2 n} + \frac{12}{p}.\]
\end{proof}

\vspace{2cm}

\section*{Acknowledgements}
The author would like to thank Shuheng Zhou for very helpful conversations on restricted convexity.

\vspace{2cm}


\end{document}